\theoremstyle{plain}
\newtheorem{thm}{Theorem}[section]
\newtheorem{lem}[thm]{Lemma}
\newtheorem{prop}[thm]{Proposition}
\theoremstyle{definition}
\theoremstyle{remark}
\newtheorem{rem}[thm]{Remark}
\begin{document}

\title{On a class of singular solutions to the incompressible 3-D Euler equation}
\author{J\"org Kampen }
\maketitle

\begin{abstract}
A class of singular $3D$-velocity vector fields is constructed which satisfy the incompressible $3D$-Euler equation. It is shown that such a solution scheme does not exist in dimension $2$. The solutions constructed are bounded and smooth up to finite time where they become singular. Although the solution is smooth and bounded there seems to be no bound in $L^2$ of the velocity field.
\end{abstract}


2010 Mathematics Subject Classification.  35Q31, 76N10
\begin{center}
 
\end{center}\section{Statement of results and idea of construction of singular solutions}
The main results of this paper concern singular solutions of the $n$-D Euler equation for $n\geq 3$, i.e., the equation system
\begin{equation}\label{Eul}
\left\lbrace \begin{array}{ll}
\frac{\partial v_i}{\partial t}+\sum_{j=1}^n v_j\frac{\partial v_i}{\partial x_j}=-\frac{\partial p}{\partial x_i},\\
\\
\sum_{i=1}^n\frac{\partial v_i}{\partial x_i}=0,\\
\\
v_i(0,x)=h_i(x).
\end{array}
\right.\end{equation}
For the main part of this paper we consider evolutions on the whole domain of ${\mathbb R}^n$. Analogous results for the $n$-torus ${\mathbb T}^n$ with respect to dual Sobolev spaces may be derived, but we did not consider the details so far. It is desriable to have a construction for data of physical interest which are $C^{\infty}$  and such that all multivariate derivatives are of polynomial decay of order $m\geq 2$, i.e., for multiindices $\alpha=(\alpha_1,\cdots ,\alpha_n)$  with $\alpha_i\geq 0$ and $|\alpha|=\sum_{i=1}^n\alpha_i$ we have
\begin{equation}
D^{\alpha}_xf_i(x)\leq \frac{C_{\alpha}}{1+|x|^{|\alpha}|},
\end{equation}
where $D^{\alpha}_x\equiv\frac{\partial^{|\alpha|}}{\partial x^{\alpha}}$ denotes the multivariate derivative of order $\alpha$. In the present paper we construct a bounded smooth vector field where the first two components are of finite energy. It may be interesting to consider similar constructions for derivatives of the value functions and possibly with weaker singularities in $t$- although the connection of data and solution may not be that simple as in this paper implying that the construction has to be more involved. If a function $f$ is of polynomial decay of any order $m\in {\mathbb N}$ then we simply say that $h$ is of polynomial decay. Polynomial decay and smoothness implies that the data are in the intersection $\cap_{s\in {\mathbb R}}H^s$ of standard Sobolev spaces   $H^s\equiv H^s\left({\mathbb R}^n\right)$ of order $s\in {\mathbb R}$, and hence polynomial decay of the Fourier transforms. 
On the $n$-torus dual Sobolev spaces may be used. In this case multiindexed vectors
\begin{equation}
\mathbf{u}^F:=\left( u_{\alpha} \right)^T_{\alpha\in {\mathbb Z}^n} 
\end{equation}
(the superscript $T$ meaning 'transposed') may be considered with complex constants $u_{\alpha}$ which decay fast enough as $|\alpha|\uparrow \infty$ corresponds to a function
 \begin{equation}\label{uclass}
  u\in C^{\infty}\left({\mathbb T}^n_l\right), 
 \end{equation}
where
\begin{equation}
u(x):=\sum_{\alpha\in {\mathbb Z}^n}u_{\alpha}\exp{\left( \frac{2\pi i\alpha x}{l}\right) }.
\end{equation}
We say that the infinite vector $\mathbf{u}^F$ is in the dual Sobolev space of order $s\in {\mathbb R}$, i.e.,
\begin{equation}\label{hs}
\mathbf{u}^F\in h^s\left({\mathbb Z}^n\right) 
\end{equation}
 in symbols, if the corresponding function $u$ defined in (\ref{uclass}) is in the Sobolev space $H^s\left({\mathbb T}^n_l\right)$ of order $s\in {\mathbb R}$. We may also define the dual space $h^s$ directly via 
 \begin{equation}
 \mathbf{u}^F\in h^s\left({\mathbb Z}^n\right)\Leftrightarrow \sum_{\alpha \in{\mathbb Z}^n}|u_{\alpha}|^2\left\langle \alpha\right\rangle^{2s}< \infty, 
 \end{equation}
where
\begin{equation}
 \left\langle \alpha\right\rangle :=\left(1+|\alpha|^2 \right)^{1/2}. 
\end{equation}
The two definitions are clearly equivalent.

Our first observation reduces the search for a class of singular solutions to the search for a class of nonsingular global solutions of certain nonlinear partial integro-differential solutions. We have  
\begin{thm}\label{thm3D}
If $n=3$ then for all data $f_1,f_2\in H^2\cap C^2 $ and $f_3\in C^2$ which satisfy the nonlinear partial integro-differential equation 
\begin{equation}\label{dataeq}
\begin{array}{ll}
{\Big(} -f_1+f_1f_{1,1}+f_2f_{1,2}-\int_{{\mathbb R}^3} K_{,1}(x-y){\Big(} f_{1,1}^2+f_{2,2}^2+\left( f_{1,1}+f_{2,2}\right) ^2+\\
\\
f_{1,2}f_{2,1}+f_{1,3}I_{3,1}(f_{1,1},f_{2,2})+f_{2,3}I_{3,2}(f_{1,1},f_{2,2}){\Big)}(y)dy{\Big)}f_{2,3}\\
\\
={\Big (} -f_2+f_1f_{2,1}+f_2f_{2,2}-\int K_{,2}(x-y){\Big (} f_{1,1}^2+f_{2,2}^2+\left( f_{1,1}+f_{2,2}\right) ^2+\\
\\
f_{1,2}f_{2,1}+f_{1,3}I_{3,1}(f_{1,1},f_{2,2})+f_{2,3}I_{3,2}(f_{1,1},f_{2,2}){\Big)}(y)dy{\Big)}f_{1,3},
\end{array}
\end{equation}
then there exists a singular solution to the incompressible Euler equation which becomes singular at finite time $t_0>0$, and is smooth for $t<t_0$. Here, 
\begin{equation}
f_{3}(x)=I_3(f_{1,1},f_{2,2}):=\int_{-\infty}^{x_3}\left( -f_{1,1}-f_{2,2}\right)(x_1,x_2,y)dy 
\end{equation}
A similar result holds for $n>3$.
\end{thm}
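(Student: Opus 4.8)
The construction I would use is a self-similar (separated) ansatz in time: seek a velocity field $v_i(t,x)=\frac{1}{t_0-t}f_i(x)$ and a pressure $p(t,x)=\frac{1}{(t_0-t)^2}q(x)$. Substituting into (\ref{Eul}), every term of the momentum equation carries the factor $(t_0-t)^{-2}$ and the divergence carries $(t_0-t)^{-1}$, so the whole system collapses to the \emph{stationary} system
\[
\sum_{i=1}^{3}\partial_i f_i=0,\qquad -f_i+\sum_{j=1}^{3}f_j\,\partial_j f_i=-\partial_i q\quad(i=1,2,3).
\]
Conversely, any bounded smooth solution $(f,q)$ of this stationary system produces, through the ansatz, a velocity field that is $C^\infty$ and bounded on $[0,t_0)\times\mathbb{R}^3$ and blows up like $(t_0-t)^{-1}$ at $t=t_0$; since $f_1,f_2\in H^2\cap C^2$ while $f_3$ is only $C^2$ (and, for a genuinely new example, $f_3\notin L^2$), the first two velocity components have finite energy for each $t<t_0$ whereas the field has no $L^2$-bound. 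So it is enough to solve the stationary system, which the remaining steps do.

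First I would eliminate the pressure. Taking the divergence of the stationary momentum equations and using $\sum_i\partial_i f_i=0$ to annihilate the linear term and the term $\sum_j f_j\partial_j(\sum_i\partial_i f_i)$, one obtains a Poisson equation $-\Delta q=S(f)$ whose right-hand side is a quadratic functional of $f$ — precisely the quadratic expression that appears under the integral sign in (\ref{dataeq}). Solving it with the Newtonian potential $K$ of $-\Delta$ on $\mathbb{R}^3$ gives $q=K*S(f)$ and $\partial_i q=K_{,i}*S(f)$; substituting back, the momentum equations become the closed nonlocal system $R_i(f):=-f_i+\sum_j f_j\partial_j f_i+K_{,i}*S(f)=0$, $i=1,2,3$, and — because $q$ was built precisely so as to kill the divergence — the identity $\sum_i\partial_i R_i(f)\equiv 0$ holds automatically.

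Next I would cut the number of unknowns from three to two. Incompressibility is equivalent to $\partial_3 f_3=-(f_{1,1}+f_{2,2})$, so I \emph{define} $f_3:=I_3(f_{1,1},f_{2,2})=\int_{-\infty}^{x_3}(-f_{1,1}-f_{2,2})\,dy$ and note $\partial_k f_3=I_{3,k}(f_{1,1},f_{2,2})$ for $k=1,2$. Then $S(f)$ reduces, after this substitution, exactly to the integrand in (\ref{dataeq}), and a direct rearrangement of $R_1$ and $R_2$ — splitting off the $x_3$-direction convective terms $f_3\,f_{1,3}$ and $f_3\,f_{2,3}$ — identifies $A$ and $B$ of (\ref{dataeq}) with $R_1-f_3 f_{1,3}$ and $R_2-f_3 f_{2,3}$ respectively (up to the fixed sign convention for $K$). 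Hence
\[
R_1\,f_{2,3}-R_2\,f_{1,3}=A\,f_{2,3}-B\,f_{1,3},
\]
the $f_3$-contributions cancelling, so the single scalar equation (\ref{dataeq}) says exactly that $(R_1,R_2)$ is pointwise proportional to $(f_{1,3},f_{2,3})$. Feeding this into the automatic identity $\partial_1 R_1+\partial_2 R_2+\partial_3 R_3\equiv0$ together with the slaved form of $f_3$ (hence of $R_3$) and the decay built into the function class, one deduces $R_1=R_2=R_3=0$, i.e.\ (\ref{dataeq}) forces the whole stationary system, and the ansatz then delivers the announced singular solution. For $n>3$ the identical scheme applies with $f_n$ slaved to $f_1,\dots,f_{n-1}$.

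The step I expect to be the main obstacle is the closing one: showing that the single algebraic relation (\ref{dataeq}) — proportionality of $(R_1,R_2)$ with $(f_{1,3},f_{2,3})$ — genuinely propagates to $R_1=R_2=R_3=0$. This is where the dimension enters decisively: in $n=2$ there is no third, slaved component to absorb the leftover degrees of freedom, the identity $\sum_i\partial_i R_i\equiv0$ couples only two residuals, and the proportionality collapses to a triviality — which is precisely why, as announced, the scheme cannot produce singular solutions in $2$D. The remaining, more routine, issues are the mapping properties of the singular integral $K_{,i}*S(f)$ (one needs $S(f)$ in an $L^1$-type space near infinity and locally bounded, so that $K_{,i}*S(f)$ is a well-defined bounded $C^2$ function — for which the polynomial-decay framework recalled in the introduction is tailor-made), and exhibiting at least one nontrivial pair $f_1,f_2\in H^2\cap C^2$ solving (\ref{dataeq}) with $f_3=I_3(f_{1,1},f_{2,2})\notin L^2$, which is what turns the absence of an energy bound into an actual feature of the example rather than a formal possibility.
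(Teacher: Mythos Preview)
Your overall architecture matches the paper exactly: the self-similar ansatz $v_i=f_i/(t_0-t)$, $p=q/(t_0-t)^2$, the reduction to a stationary profile system, the slaving $f_3=I_3(f_{1,1},f_{2,2})$ from incompressibility, and the Leray--projection elimination of the pressure are precisely what Section~3 does. You also correctly identify (\ref{dataeq}) as the relation $R_1 f_{2,3}=R_2 f_{1,3}$, i.e.\ pointwise proportionality of $(R_1,R_2)$ with $(f_{1,3},f_{2,3})$.

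The place where you diverge from the paper --- and where there is a genuine gap --- is the closing mechanism. You propose to promote the single scalar relation (\ref{dataeq}) to the full system $R_1=R_2=R_3=0$ via the automatic identity $\partial_1R_1+\partial_2R_2+\partial_3R_3\equiv0$. This cannot work as written: once $f_3=I_3$ and $G_3=K_{,3}*g$ are fixed, $R_3$ is a \emph{specific} function of $f_1,f_2$, and nothing in (\ref{dataeq}) or in the divergence identity forces $R_3=0$. Concretely, writing $R_1=\lambda f_{1,3}$, $R_2=\lambda f_{2,3}$, the divergence identity becomes a single first-order PDE for the unknown $\lambda$ with source $-\partial_3R_3$; for generic $R_3\neq0$ this has nontrivial decaying solutions $\lambda$. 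One scalar compatibility plus one differential identity is two conditions, not three.

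The paper does \emph{not} close via the divergence identity. Instead it uses the third momentum equation $R_3=0$ \emph{algebraically}: under the condition $f_{1,1}+f_{2,2}>-1$ (hence the appearance of a possibly smaller $t_0$ via the rescaling $t\mapsto ct$), it solves the third equation for $f_3$ as $f_3=(f_1f_{3,1}+f_2f_{3,2}-G_3)/(1+f_{1,1}+f_{2,2})$, substitutes this expression into the first two momentum equations to obtain the $2\times 2$ system (\ref{burgersred})--(\ref{burgersred1}), and only then cross-multiplies by $f_{2,3}$ and $f_{1,3}$ to arrive at (\ref{dataeq}). In the paper's logic the third equation is an \emph{input} used to eliminate one relation, not a consequence to be deduced from the other two; (\ref{dataeq}) is the resulting compatibility condition of the remaining block. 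Your route skips this algebraic use of $R_3=0$, and that is the missing idea.
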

\begin{rem}
A similar result may hold on the $n$-torus.
\end{rem}

A second observation is that solutions to (\ref{dataeq}) exist, and in a rather generic sense, i.e., in a sense which makes it impossible to single out the data set of the constructed set as 'exceptional'. We have
\begin{thm}\label{integrodiffeq}For each $f_1\in H^2\cap C^2$ which is of polynomial decay of order $2$ there exist $f_2\in H^2\cap C^2$ of polynomial decay of order $2$, and $f_3\in C^2$ such that the vector field $(f_1,f_2,f_3)^T$ is a global solution of equation (\ref{dataeq}). More general, for or each $f_1\in H^m\cap C^m$ which is of polynomial decay of order $m$ there exist $f_2\in H^m\cap C^m$ and $f_3\in C^m$ of polynomial decay of order $m$ such that the vector field $(f_1,f_2,f_3)^T$ is a global solution of equation (\ref{dataeq}).
\end{thm}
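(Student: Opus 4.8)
\emph{Plan of proof.} The plan is to regard $f_1$ as fixed and to solve equation (\ref{dataeq}) for the single unknown $f_2$, with $f_3=I_3(f_{1,1},f_{2,2})$ determined from $(f_1,f_2)$ at every stage. Abbreviate by $N=N[f_1,f_2]$ the quadratic expression occurring under $\int K_{,1}$ and $\int K_{,2}$ in (\ref{dataeq}), put $p:=K*N$, and set
\begin{equation}\label{PQdef}
P:=-f_1+f_1f_{1,1}+f_2f_{1,2}-p_{,1},\qquad Q:=-f_2+f_1f_{2,1}+f_2f_{2,2}-p_{,2},
\end{equation}
so that (\ref{dataeq}) reads simply $Pf_{2,3}=Qf_{1,3}$. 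Since $f_{2,3}=\partial_3f_2$ occurs linearly, with coefficient $P$, on the left and not at all on the right, I would rewrite this as the quasilinear, nonlocal transport equation
\begin{equation}\label{transport}
\partial_3f_2+a\,\partial_1f_2+b\,\partial_2f_2=c,\qquad a:=-\frac{f_1f_{1,3}}{P},\quad b:=-\frac{f_2f_{1,3}}{P},\quad c:=\frac{f_{1,3}}{P}\big(-f_2-p_{,2}\big),
\end{equation}
regarded as an evolution in the variable $x_3$. One then prescribes $f_2$ on the slice $\{x_3=0\}$ --- essentially any admissible horizontal profile works, which is the source of the ``generic'' character asserted --- subject to a mild shooting condition at $x_3=0$ forcing decay as $x_3\to\pm\infty$, and solves (\ref{transport}) by the method of characteristics. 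This turns the problem into a fixed point $f_2=\mathcal{T}[f_2]$: transport $f_2|_{x_3=0}$ along the flow of $(a,b)$ and add $\int c$ along characteristics, with every occurrence of $f_2$, $f_3$, $N$, $p$ on the right read as the previous iterate.

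The second step is to solve $f_2=\mathcal{T}[f_2]$ by the contraction principle in a small closed ball of the space $X_m$ of functions lying in $H^m\cap C^m(\mathbb{R}^3)$ with the prescribed polynomial decay of order $m$ ($m=2$ in the first assertion). The ingredients are: (i) $X_m$ is, up to the decay exponent, a Banach algebra, so the polynomial combinations $f_1f_{1,1}$, $f_2f_{2,2}$, $f_{1,2}f_{2,1}$ and the expression $N$ stay in $X_m$; (ii) the convolution operators $K_{,i}*$ on $\mathbb{R}^3$, with symbols $i\xi_i/|\xi|^2$, gain one derivative and are bounded into $X_m$ on the subclass of $X_m$ with enough decay for the convolutions to converge, by the standard potential and Calder\'on--Zygmund estimates; (iii) solving the linear transport equation (\ref{transport}) with frozen $(a,b,c)$ preserves $H^m\cap C^m$ over a bounded $x_3$-window and is controlled by $c$ and by the flow of $(a,b)$, while horizontal decay is inherited from $f_1,f_2$. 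Making the ball small (equivalently, solving first on a finite slab $\{|x_3|\le R\}$ with an $x_3$-weight and then gluing) makes $\mathcal{T}$ a contraction; the fixed point is the desired $f_2$, and a bootstrap --- differentiating the fixed-point identity up to order $m$ and using $f_1\in H^m\cap C^m$ --- yields the regularity and decay claimed for $f_2$ and for $f_3=I_3(f_{1,1},f_{2,2})$, hence the general statement.

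The hard part is a threatened loss of derivatives. Through $I_{3,1}=\partial_1f_3$ and $I_{3,2}=\partial_2f_3$ --- each an $x_3$-primitive of a \emph{second} horizontal derivative --- the expression $N$ contains the second derivatives $f_{2,21},f_{2,22}$ of $f_2$; since $K_{,i}*$ gains only one derivative, the terms $p_{,1}$ (hence $a,b$, through $P$) and $p_{,2}$ (hence $c$) require one more derivative of $f_2$ than they return, so a naive $\mathcal{T}$ maps $X_{m+1}$ into $X_m$ and the contraction does not close. This must be overcome either by a finer analysis of the structure of $N$ --- the offending second derivatives enter only through the smoothing nonlocal term and may, after integration by parts against $K_{,i}$, be absorbable --- or else by running the whole scheme as a Nash--Moser iteration with tame estimates, natural here since the data and solution are $C^\infty$ with polynomial decay. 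A second, milder obstacle is the division by $P$ in (\ref{transport}): since $P=P[f_2]$ itself depends on the unknown and on $p_{,1}$, one must keep $P$ away from zero along the iteration; I would handle this by the regularization $1/P\rightsquigarrow P/(P^2+\varepsilon^2)$, running the construction for each $\varepsilon>0$ and passing to the limit $\varepsilon\downarrow 0$ using uniform $X_m$-bounds and the gain of a derivative. Finally, one must verify --- using the polynomial decay of $f_1$ and the fact that the primitive $I_3$ decays only in the horizontal variables --- that the $|x|^{-2}$ tail of $K_{,i}$, which is not integrable on $\mathbb{R}^3$, still leaves $K_{,i}*N\in X_m$; this is exactly the point at which the hypotheses ``$f_1$ of polynomial decay of order $m$'' and ``$f_1\in H^m\cap C^m$'' are used.
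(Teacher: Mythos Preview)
Your route is genuinely different from the paper's, and the gap you yourself flag is real and not closed. You treat $x_3$ as the evolution variable and try to solve a quasilinear nonlocal transport equation by a contraction in $X_m$; the paper instead introduces an \emph{artificial time} $t$ and a viscosity $\epsilon>0$, replaces the stationary relation (\ref{dataeq}) by the parabolic Cauchy problem
\[
\partial_t v^{\epsilon}-\epsilon\Delta v^{\epsilon}+\text{(left side of (\ref{dataeq}) with }f_2\rightsquigarrow v^{\epsilon}\text{)}=\text{(right side)},\qquad v^{\epsilon}(0,\cdot)=0,
\]
proves global existence for this evolution by a contraction in the exponentially weighted norm $\sup_{t\ge 0}e^{-Ct}|\,\cdot\,(t)|_{H^2}$ (with the integral operators $I_{3,1},I_{3,2}$ first frozen as a parameter $w$ and then recovered by a second fixed point), and finally sets $f_2:=\lim_{\epsilon\downarrow 0}\lim_{t\uparrow\infty}v^{\epsilon}(t,\cdot)$. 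The heat kernel $G_\epsilon$ supplies exactly the one derivative of smoothing that your scheme is missing, and the Duhamel representation makes the division by $P$ unnecessary since the equation is never solved for $\partial_3 f_2$.

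In your proposal the loss-of-derivatives obstruction is decisive: $N$ genuinely contains $f_{2,21},f_{2,22}$ through $I_{3,1},I_{3,2}$, and $K_{,i}*$ recovers only one order, so $\mathcal{T}:X_{m+1}\to X_m$ as you note. Neither of your suggested fixes is carried out, and the integration-by-parts idea does not obviously help because the bad terms $f_{1,3}I_{3,1}+f_{2,3}I_{3,2}$ are not total derivatives of anything with one fewer order on $f_2$. A Nash--Moser scheme might work but is a substantial project, not a remark. The division by $P$ is a second genuine obstacle: $P$ depends on the unknown through the nonlocal $p_{,1}$ and there is no a priori reason it avoids zero; your $\varepsilon$-regularization $P/(P^2+\varepsilon^2)$ changes the equation and you would still need uniform-in-$\varepsilon$ bounds in $X_m$, which brings you back to the derivative-loss issue. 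The paper's parabolic detour is precisely what buys both the smoothing and the avoidance of any pointwise division.
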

Next let us consider some background for the results above. The Euler equation is an equation without diffusion. If we add an diffusion operator $-\nu \Delta $ with a constant viscosity $\nu>0$ we get the incompressible Navier-Stokes equation
\begin{equation}\label{Nav}
\left\lbrace \begin{array}{ll}
\frac{\partial v_i}{\partial t}-\nu \Delta v_i+\sum_{j=1}^n v_j\frac{\partial v_i}{\partial x_j}=-\frac{\partial p}{\partial x_i},\\
\\
\sum_{i=1}^n\frac{\partial v_i}{\partial x_i}=0,\\
\\
v_i(0,x)=h_i(x).
\end{array}
\right.\end{equation} 
If we cancel the pressure $p$ and incompressibility we get the multivariate Burgers equation.
\begin{equation}\label{Burg}
\left\lbrace \begin{array}{ll}
\frac{\partial u_i}{\partial t}-\nu \Delta u_i+\sum_{j=1}^n u_j\frac{\partial u_i}{\partial x_j}=g_i,\\
\\
v_i(0,x)=h_i(x).
\end{array}
\right.\end{equation} 
Here the $g_i\in \cap_{s\in {\mathbb R}} H^s$ are additional source terms which may occur also in the formulation of the incompressible Navier-Stokes equation. For $\nu>0$ global existence for the multivariate Burgers equation is known. An elementary proof may be found in \cite{KB1} and \cite{KB2}. For the multivariate Burgers equation it is more or less obvious that global existence is not preserved as the equation degenerates, i.e., as $\nu \downarrow 0$. In this case, 
the function $v_i:{\mathbb R}^n\rightarrow {\mathbb R},~1\leq i\leq n$ defined by
\begin{equation}\label{solscheme}
v_i(t,x):=\frac{f_i(x)}{t-1},
\end{equation}
along with
\begin{equation}\label{spatial}
-f_i(x)+\sum_{j=1}^n f_j(x)\frac{\partial f_i(x)}{\partial x_j}=0
\end{equation}
for all $1\leq i\leq n$ and all $x\in {\mathbb R}^n$  solves the inviscid equation
\begin{equation}
\frac{\partial v_i}{\partial t}+\sum_{j=1}^n v_j\frac{\partial v_i}{\partial x_j}=0
\end{equation}
for $1\leq i\leq n$, and with initial data
\begin{equation}
v_i(0,x)=h_i(x)=:-f_i(x).
\end{equation}
Obviously, this velocity field becomes singular at $t=1$ as long as $f_i\neq 0$ for some $1\leq i\leq n$.
This is a phenomenon which we can observed in all dimensions $n\geq 1$. We may say that viscosity $\nu=0$ is a bifurcation point for existence of global behavior. Indeed for $\nu<0$ the equation is even ill-posed in most situations.   
This situation becomes more complicated and more interesting if we consider the analogous relationship between the incompressible Euler equation and the incompressible Navier-Stokes equation. For in this case we know that that the Euler equation has global solutions in dimension $n=2$ and we maintained that the incompressible Navier-Stokes equation has global solutions in different situations (cf. \cite{KB2}, \cite{K3}, and \cite{KNS}). The difference of the Euler equation and a degenerate Burgers equation is the existence of pressure and incompressibility, and the relation of both which yields elimination of the pressure by the Leray projection.
If the divergence of the vector field $\mathbf{v}=(v_1,\cdots ,v_n)^T$ of the form (\ref{solscheme}) is zero, i.e., if
\begin{equation}
\mbox{div}\mathbf{v}=\sum_{i=1}^n\frac{\partial v_i}{\partial x_i}=:\sum_{i=1}^n v_{i,i}=0,
\end{equation}
then this incompressibility together with the relation (\ref{spatial}) implies that the divergence of the vector field $\mathbf{f}=(f_1,\cdots ,f_n)^T$ is zero, and whence
\begin{equation}
\begin{array}{ll}
-\sum_{i=1}^nf_{i,i}(x)+\sum_{i=1}^n\sum_{j=1}^n \left( f_j(x)f_{i,j}(x)\right)_{,i}\\
\\
=\sum_{i,j=1}^n  f_{j,i}(x)f_{i,j}(x)=0
\end{array}
\end{equation}
However the Leray projection form of the incompressible Euler equation
is just
\begin{equation}
\left\lbrace \begin{array}{ll}
\frac{\partial v_i}{\partial t}+\sum_{j=1}^n v_j\frac{\partial v_i}{\partial x_j}=\int K_{n,i}(x-y)\frac{\partial v_i}{\partial x_j}(t,y)\frac{\partial v_j}{\partial x_i}(t,y)dy,\\
\\
v_i(0,x)=-f_i(x).
\end{array}\right.
\end{equation}
Here, $K_n$ denotes the fundamental solution of the Poisson equation in dimension $n$ and $K_{n,i}$ denotes its partial derivative with respect to the variable $x_i$.
This leads is to the conclusion
\begin{prop}
The function $v_i:{\mathbb R}^n\rightarrow {\mathbb R},~1\leq i\leq n$ defined for all $1\leq i\leq n$ by
\begin{equation}\label{velform}
v_i(t,x):=-\frac{f_i(x)}{1-t}
\end{equation}
satisfies the incompressible Euler equation with initial data $h_i=-f_i: {\mathbb R}^n\rightarrow {\mathbb R}$ if the conditions
\begin{equation}\label{spatial2}
-f_i(x)+\sum_{j=1}^n f_j(x)\frac{\partial f_i(x)}{\partial x_j}=0,~1\leq i\leq n,
\end{equation}
and 
\begin{equation}\label{spatial2}
\sum_{i,j=1}^n  f_{j,i}(x)f_{i,j}(x)=0,
\end{equation} 
and
\begin{equation}
\mbox{div}~\mathbf{f}=\sum_{i=1}^nf_{i,i}=0
\end{equation}
are satisfied.
\end{prop}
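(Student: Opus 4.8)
The plan is to verify the assertion by direct substitution into the Leray‑projected form of the Euler system displayed just above, so that the pressure need never be written down explicitly. With $v_i(t,x)=-f_i(x)/(1-t)$ as in (\ref{velform}) one has $\partial_t v_i=-f_i(x)/(1-t)^2$ and $\partial_{x_j}v_i=-f_{i,j}(x)/(1-t)$, hence
\[
\frac{\partial v_i}{\partial t}+\sum_{j=1}^n v_j\frac{\partial v_i}{\partial x_j}
=\frac{1}{(1-t)^2}\left(-f_i(x)+\sum_{j=1}^n f_j(x)f_{i,j}(x)\right),
\]
while the Poisson‑kernel term on the right‑hand side, whose integrand for a divergence‑free field is $\sum_{j,k}v_{j,k}(t,y)v_{k,j}(t,y)$, equals $(1-t)^{-2}\int K_{n,i}(x-y)\sum_{j,k}f_{j,k}(y)f_{k,j}(y)\,dy$. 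So every term carries the common factor $(1-t)^{-2}$.

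The second step is to feed in the hypotheses. By the second condition the integrand $\sum_{j,k}f_{j,k}f_{k,j}$ vanishes identically, so the kernel term is $0$ for every $t<1$ — no integrability of $K_{n,i}$ against $\mathbf f$ is even needed. Cancelling the common factor $(1-t)^{-2}$, the momentum equation thus reduces precisely to the first condition, $-f_i+\sum_j f_jf_{i,j}=0$, which holds by assumption. Incompressibility is immediate from the third condition, since $\operatorname{div}\mathbf v=-(1-t)^{-1}\operatorname{div}\mathbf f=0$, and evaluating (\ref{velform}) at $t=0$ gives $v_i(0,x)=-f_i(x)=h_i(x)$, matching the prescribed data. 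Smoothness of $v$ on $[0,1)\times{\mathbb R}^n$ is inherited from that of $\mathbf f$, and the blow‑up as $t\uparrow 1$ is visible in the factor $(1-t)^{-1}$.

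Finally I would record the standard passage from the Leray‑projected equation back to the classical pressure formulation (\ref{Eul}): taking the divergence of the momentum equation and using $\operatorname{div}\mathbf v=0$ gives the Poisson equation $-\Delta p=\sum_{j,k}v_{j,k}v_{k,j}$, whose right‑hand side is identically zero here; choosing the harmonic representative $p\equiv 0$ one verifies directly that $(\mathbf v,0)$ solves (\ref{Eul}).

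The calculation is entirely routine, so there is no genuine obstacle; the only points worth a remark are that the assumed $C^2$ regularity is exactly what makes the differentiations in (\ref{velform}) and the pointwise quadratic products meaningful, and that the second hypothesis is in any case not independent — differentiating the first condition with respect to $x_i$, summing over $i$, and using $\operatorname{div}\mathbf f=0$ reproduces it, as already indicated in the discussion preceding the proposition.
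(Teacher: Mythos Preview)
Your argument is correct and follows essentially the same route as the paper: the proposition is stated as the conclusion of the preceding discussion, where the ansatz is substituted into the Leray-projected Euler equation, the Burgers-type condition kills the left-hand side, and the quadratic condition $\sum_{i,j}f_{j,i}f_{i,j}=0$ makes the Poisson-kernel term vanish. Your additional remarks---recovering the classical pressure form with $p\equiv 0$, and noting that the second hypothesis is already implied by the first together with incompressibility---are also present in the paper's discussion just before the proposition.
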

If $f_i$ satisfy periodic boundary conditions, then an analogous statement is true on the $n$-torus of course.
For a torus of size $l$ the relations (\ref{spatial}) may be written in the basis $\left\lbrace \exp\left( \frac{2\pi i kx}{l}\right) \right\rbrace_{k\in {\mathbb Z}^n}$, and where $kx=\sum k_ix_i$ along with $x=(x_1,\cdots, x_n)$. This leads to the relation
\begin{equation}
f_{i\alpha}=-\sum_{j=1}^n\sum_{\beta \in {\mathbb Z}^n}f_{j(\alpha-\beta)}\beta_jf_{i\beta}
\end{equation}
for the modes $f_{i\alpha}$ for $1\leq i\leq n$ and the multiindices $\alpha,\beta \in {\mathbb Z}^n$
Next to the two observations stated at the beginning of this section we make a third observation which we would expect from the fact that global solutions to the Euler equation exist in dimension $2$. We have
\begin{thm}\label{thm2D}
In case of dimension $n=2$ the only classical solution of form (\ref{velform}) is
\begin{equation}
f_i\equiv 0
\end{equation}
for all $1\leq i\leq n$. This is compatible with the well-known fact that classical global solutions for 2-D incompressible Euler equations exist.
\end{thm}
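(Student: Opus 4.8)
The plan is to convert the three scalar relations of the preceding Proposition into a single pointwise statement about the Jacobian matrix
\[
J(x):=\bigl(f_{i,j}(x)\bigr)_{1\le i,j\le 2},\qquad \bigl(J(x)\bigr)_{ij}=\partial_{x_j}f_i(x),
\]
and to show they force $\mathrm{Id}-J(x)$ to be invertible at every point. First I would rewrite the quasilinear relation $-f_i+\sum_{j}f_jf_{i,j}=0$: since $\sum_j f_jf_{i,j}=\sum_j \bigl(J\bigr)_{ij}f_j=(Jf)_i$, this relation says exactly $(\mathrm{Id}-J(x))\,f(x)=0$ for every $x$. Next, the incompressibility $\operatorname{div}f=0$ is precisely $\operatorname{tr}J(x)=0$, and the relation $\sum_{i,j}f_{j,i}f_{i,j}=0$ is precisely $\operatorname{tr}\bigl(J(x)^2\bigr)=0$, since $\sum_{i,j}f_{j,i}f_{i,j}=\sum_{i,k}\bigl(J\bigr)_{ik}\bigl(J\bigr)_{ki}=\bigl(J^2\bigr)_{11}+\bigl(J^2\bigr)_{22}$.

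The key step uses the dimension. For a $2\times2$ matrix one has the Newton identity $\operatorname{tr}(J^2)=(\operatorname{tr}J)^2-2\det J$; combined with $\operatorname{tr}J(x)=0$ this yields $\det J(x)=0$ as well, so the characteristic polynomial of $J(x)$ is $\lambda^2$, and by Cayley--Hamilton $J(x)^2=0$, i.e.\ the Jacobian is nilpotent at every point. Hence $(\mathrm{Id}-J(x))(\mathrm{Id}+J(x))=\mathrm{Id}-J(x)^2=\mathrm{Id}$, so $\mathrm{Id}-J(x)$ is invertible with inverse $\mathrm{Id}+J(x)$. Applying $\mathrm{Id}+J(x)$ to the identity $(\mathrm{Id}-J(x))f(x)=0$ gives $f(x)=0$; since $x$ was arbitrary, $f\equiv0$, and therefore $f_1=f_2\equiv0$.

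There is essentially no analytic obstacle: the entire argument is pointwise and uses only $f\in C^1$ (contained in the notion of a classical solution), so no decay or integrability hypotheses enter. The step I would flag as the heart of the matter — and the only delicate point — is the nilpotency deduction via the Newton identity, because this is exactly where the hypothesis $n=2$ is used: in dimension $n\ge 3$ the conditions $\operatorname{tr}J=0$ and $\operatorname{tr}(J^2)=0$ do \emph{not} force $\det J=0$, so $\mathrm{Id}-J$ need not be invertible and the conclusion genuinely fails. This is of course consistent with Theorems \ref{thm3D} and \ref{integrodiffeq}, which produce nontrivial $f$ for $n\ge3$. The remaining care is purely bookkeeping: checking that the three given relations really do correspond, in the Jacobian convention above, to $(\mathrm{Id}-J)f=0$, $\operatorname{tr}J=0$ and $\operatorname{tr}(J^2)=0$.
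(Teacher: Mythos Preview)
Your proof is correct and, in my view, cleaner than the paper's own. Both arguments reach the determinant condition $\det J(x)=0$ from $\operatorname{tr}J=0$ and $\operatorname{tr}(J^2)=0$, but they diverge from there. The paper, working on the $2$-torus, argues that $\det J=0$ forces the columns $(f_{1,1},f_{1,2})^T$ and $(f_{2,1},f_{2,2})^T$ to be linearly dependent via some scalar $\lambda$, then uses the Fourier ansatz together with incompressibility to pin down $\lambda=-1$, and finally substitutes back into the Burgers relations to obtain $f_1=f_2=0$. Your route --- Cayley--Hamilton gives $J(x)^2=0$, hence $(\mathrm{Id}-J(x))^{-1}=\mathrm{Id}+J(x)$ exists, hence $f(x)=0$ directly --- is entirely pointwise and works on any domain with only $C^1$ regularity, needing no Fourier expansion and no periodicity. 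It also sidesteps a subtlety in the paper's argument, namely that the proportionality factor $\lambda$ is a priori a function of $x$ (and is indeterminate where both columns vanish), whereas the paper appears to treat it as a global constant when invoking the Fourier relations. Finally, your formulation makes transparent exactly where $n=2$ enters, via the Newton identity, which is in line with the paper's contrast with Theorems~\ref{thm3D} and~\ref{integrodiffeq}.
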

In the next section we prove this theorem. Then in section 3 we prove theorem \ref{thm3D} and in section 4 we prove theorem \ref{integrodiffeq}. In the last section we prove the existence of global solutions to a Cauchy problem which is related to the partial integro-differential equation of theorem \ref{thm3D}. We may summarize the considerations of this and the mentioned related papers saying that viscosity $\nu=0$ is a bifurcation point of global solution behavior in case of dimension $n\geq 3$. 

\section{Proof of theorem \ref{thm2D}}
We prove the theorem on the $n$-torus ${\mathbb T}^n$ for $n=2$.
For dimension $n=2$ divergence becomes
\begin{equation}
v_{1,1}(t,x)=-v_{2,2}(t,x)
\end{equation}
for all $(t,x)$, hence 
\begin{equation}
f_{1,1}(x)=-f_{2,2}(x).
\end{equation}
Using this incompressibility, and differentiating the relations
\begin{equation}\label{burg}
\begin{array}{ll}
v_1+v_1v_{1,1}+v_2v_{1,2}=0\\
\\
v_2+v_1v_{2,1}+v_2v_{2,2}=0
\end{array}
\end{equation}
with respect to $x_1$ and $x_2$ respectively, summing up, and multiplying by $(1-t)$ leads to
\begin{equation}
f_{1,1}^2+2f_{1,2}f_{2,1}+f_{2,2}^2=0.
\end{equation}
Along with divergence the latter equation leads to
\begin{equation}\label{frel}
f_{1,2}f_{2,1}+f_{2,2}^2=f_{1,1}^2+f_{1,2}f_{2,1}=-f_{1,1}f_{2,2}+f_{1,2}f_{2,1}=0.
\end{equation}
The last equation of (\ref{frel}) may be rewritten in the form 
\begin{equation}
\mbox{det}\left( \begin{array}{ll}f_{1,1} & f_{2,1}\\
f_{1,2}& f_{2,2}
\end{array}\right)=0. 
\end{equation}
Hence, the vectors $\left(f_{1,1},f_{1,2}\right)^T$ and  $\left(f_{2,1},f_{2,2}\right)^T$ are linearly dependent, i.e., there is a $\lambda\in {\mathbb R}$ such that
\begin{equation}\label{rel1}
f_{1,1}=\lambda f_{2,1} \mbox{ and } f_{1,2}=\lambda f_{2,2}.
\end{equation}
The latter relation may be considered together with the incompressibility relation
\begin{equation}\label{rel2}
f_{1,1}=-f_{2,2}.
\end{equation}
Consider the ansatz
\begin{equation}\label{rel3}
f_i=\sum_{\alpha \in {\mathbb Z}^2}f^i_{\alpha}\exp\left(2\pi i \alpha x\right) 
\end{equation}
for $1\leq i\leq 2$, and $\alpha=(\alpha_1,\alpha_2)\in {\mathbb Z}^2$, and $\alpha x:=\alpha_1x_1+\alpha_2x_2$. Note that (\ref{rel2}) is equivalent to the statement that for all $\alpha\in {\mathbb Z}^2$
\begin{equation}
f^1_{\alpha}\alpha_1=-f^2_{\alpha}\alpha_2.
\end{equation}
This implies that $\lambda=-1$ in (\ref{rel1}). Now from (\ref{burg}) we have
\begin{equation}
\begin{array}{ll}
-f_1+f_1f_{1,1}+f_2f_{1,2}=0\\
\\
-f_2+f_1f_{2,1}+f_2f_{2,2}=0,
\end{array}
\end{equation}
and this together with the information $\lambda=-1$ in (\ref{rel1}) and incompressibility leads to
\begin{equation}\label{rel4}
\begin{array}{ll}
-f_1+f_1f_{1,1}+f_2f_{1,2}=-f_1+f_1f_{1,1}-f_2f_{2,2}=0,\\
\\
-f_2+f_1f_{2,1}+f_2v_{2,2}=-f_2-f_1f_{1,1}+f_2f_{2,2}=0,
\end{array}
\end{equation}
Adding both equations we get
\begin{equation}
f_2=-f_1.
\end{equation}
This together with (\ref{rel4}) and vanishing divergence leads to
\begin{equation}
\begin{array}{ll}
-f_1+f_1f_{1,1}-f_2f_{2,2}=-f_1+f_1(f_{1,1}+f_{2,2})=-f_1=0,\\
\\
-f_2-f_1f_{1,1}+f_2f_{2,2}=-f_2+f_2(f_{1,1}+f_{2,2})=-f_2=0,
\end{array}
\end{equation}
and we are done.
\section{Proof of theorem \ref{thm3D}}
Let us first explain why the theorem is stated with finite (maybe smaller) $t_0>0$ instead of $t_0=1$ as may be expected from reading the considerations made so far. Indeed, in the following we define
\begin{equation}
v_i(t,x):=\frac{f_i(x)}{t-1}
\end{equation}
and derive an equation as in the statement of theorem \ref{thm3D} above. However this imposes the condition
\begin{equation}\label{below}
f_{1,1}+f_{2,2}>-1
\end{equation}
which is satisfied for sufficiently small $H^1$-norm of $f_1$ and $f_2$. However reading the argument below with the vector field 
\begin{equation}
\tilde{v}_i(t,x):=\frac{f_i(x)}{ct-1},~1\leq i\leq n,~ c>0
\end{equation}
leads to the concusion that the restriction in (\ref{below}) is no essential restriction.
If we choose such a vector field, then the argument below imposes the condition 
\begin{equation}
f_{1,1}+f_{2,2}>-c,
\end{equation}
and choosing $c>0$ large enough this is no essential restriction: given some $f_1\in H^2\cap C^2$ we shall show that there are $f_2\in H^2\cap C^2$ and $f_3\in C^2$ such that we have a divergence free vector field, and such that the pair $(f_1,f_2)$ solves the partial integro-differential equation of theorem \ref{thm3D}. We may then choose $c>0$ as an upper bound for $|f_{1,1}|+|f_{2,2}|$ and apply the following argument in order to show that for this data we have a solution which becomes singular for some $t_0<\frac{1}{c}$. The reader should think of small $H^1$-data first such that the condition in (\ref{below}) is satisfied and then consider the preceding remark in order to convince himself that the condition (\ref{below}) is a convenient assumption which can be easily dropped.
We prove the theorem for $\mathbb{R}^3$ and make some remarks that it applies also to the case where the domain is the $n$-torus for $n=3$ as we go along. We construct a vector field $(f_1,f_2,f_3)^T$
with functions $f_i:C^{\infty},~1\leq i\leq n$, and which satisfies the following three conditions
\begin{itemize}
 \item[i)] The data satisfy a multivariate Burgers equation
 \begin{equation}
\begin{array}{ll}
-f_1+f_1f_{1,1}+f_2f_{1,2}+f_3f_{1,3}=G_1\\
  \\
-f_2+f_1f_{2,1}+f_2f_{2,2}+f_3f_{2,3}=G_2\\
\\
-f_3+f_1f_{3,1}+f_2f_{3,2}+f_3f_{3,3}=G_3
\end{array}
\end{equation}
where $G_i\in C^2$ are some source term functions chosen below. 
\\
\\
\item[ii)] The divergence of the vector field is zero, i.e.,
           \begin{equation}
           f_{1,1}+f_{2,2}+f_{3,3}=0
           \end{equation}
\item[iii)]
The Leray projection term of the vector field $\left(f_1,f_2,f_3\right)^T$ satisfies
\begin{equation}
          f_{1,1}^2+f_{2,2}^2+f_{3,3}^2+f_{1,2}f_{2,1}+f_{1,3}f_{3,1}+f_{2,3}f_{3,2}=g
         \end{equation}
for some function $g\in C^{2}$.
In case of the domain ${\mathbb R}^n$ the condition takes the form
\begin{equation}
G_i(x)=\int K_{,i}(x-y)g(y)dy, 
\end{equation}
where $K$ is the Poisson kernel in dimension $N=3$ and $K_{,i}$ denotes its partial derivative with respect to the variable $x_i$. 
Furthermore in case of the $n$-torus $g$ is such that
the functions $G_i$ above satisfy for $1\leq i\leq n$
\begin{equation}
G_{i}=\frac{\sum_{\alpha \in {\mathbb Z}^3}\left( \frac{2\pi i\alpha_i}{l}\right) g_{\alpha}\exp\left(\frac{2\pi i \alpha x}{l} \right) }{\sum_{i=1}^n\alpha_i^2},
\end{equation}
where 
\begin{equation}
g(x)=\sum_{\alpha \in {\mathbb Z}^3}g_{\alpha}\exp\left(\frac{2\pi i \alpha x}{l} \right) 
\end{equation}
(In the following we consider the case $l=1$ w.l.o.g..)
\end{itemize}
It is clear then that the velocity field
\begin{equation}
v_i(t,x)=\frac{f_i(x)}{t-1},~1\leq i\leq n
\end{equation}
satisfies the incompressible Navier Stokes equation in its Leray projection form with initial data $(f_1(x),f_2(x),f_3(x))^T$ on the domain $[0,1)\times {\mathbb R}^n$ (and $[0,1)\times {\mathbb T}^n$ respectively). Equivalently, it satisfies a multivariate Burgers equation
\begin{equation}
v_{i,t}+v_1v_{i,1}+v_2v_{i,2}+v_3v_{i,3}=\frac{G_i}{(1-t)^2},
\end{equation}
for $1\leq i\leq n$ by construction. We want to show that for 
$f_1\in H^2\cap C^2$  
there exists $f_2\in H^2\cap C^2$ and $f_3\in C^2$ such that conditions i),ii), and iii) are satisfied. 
First for any data $f_1,f_2\in H^2\cap C^2$ we have decay to zero at infinity for derivatives up to second order, i.e., as $|x|\uparrow \infty$. Hence from incompressibility
(condition ii) above) we have
\begin{equation}
f_{3}(x)=I_3(f_{1,1},f_{2,2}):=\int_{-\infty}^{x_3}\left( -f_{1,1}-f_{2,2}\right)(x_1,x_2,y)dy 
\end{equation}
(we may invoke the assumption of some polynomial decay here but it is not necessary).
 Next we reduce condition i). The third equation of condition i) together with incompressibility $f_{3,3}=-f_{1,1}-f_{2,2}$ leads to
\begin{equation}
-f_3+f_{1}f_{3,1}+f_2f_{3,2}+f_3\left(-f_{1,1}-f_{2,2}\right)=G_3. 
\end{equation}
This means that
\begin{equation}
f_3=-\frac{G_3-f_1f_{3,1}-f_2f_{3,2}}{-1-f_{1,1}-f_{2,2}},
\end{equation}
where $f_{1,1}+f_{2,2}\neq 1$ and
\begin{equation}
f_1f_{3,1}-f_2f_{3,2}=G_3
\end{equation}
otherwise. For the moment we assume that $1$ is not in the range of $f_{1,1}+f_{2,2}$ such that the first alternative holds for all arguments $x\in {\mathbb T}^n$.
Then condition i) reduces to 
\begin{equation}\label{burgersred}
\begin{array}{ll}
-f_1+f_1f_{1,1}+f_2f_{1,2}+\left(\frac{G_3-f_1f_{3,1}-f_2f_{3,2}}{1-f_{1,1}-f_{2,2}}\right) f_{1,3}=G_1\\
  \\
-f_2+f_1f_{2,1}+f_2f_{2,2}+\left(\frac{G_3-f_1f_{3,1}-f_2f_{3,2}}{1-f_{1,1}-f_{2,2}}\right)f_{2,3}=G_2
\end{array}
\end{equation}
We have not defined $G_1$, $G_2$, and $G_3$ yet. They are defined via the condition iii). We are free to define $g$ in this condition, i.e., we {\it define}
\begin{equation}
\begin{array}{ll}
g:= f_{1,1}^2+f_{2,2}^2+f_{3,3}^2+f_{1,2}f_{2,1}+f_{1,3}f_{3,1}+f_{2,3}f_{3,2}\\
\\
=f_{1,1}^2+f_{2,2}^2+\left( f_{1,1}+f_{2,2}\right) ^2+f_{1,2}f_{2,1}+f_{1,3}f_{3,1}+f_{2,3}f_{3,2}
\end{array}
\end{equation}
The system (\ref{burgersred}) is linear in $f_{3,1}$ and $f_{3,2}$. We get 
\begin{equation}\label{burgersred1}
\begin{array}{ll}
\left( 1-f_{1,1}-f_{2,2}\right) \left( -f_1+f_1f_{1,1}+f_2f_{1,2}-G_1\right) =\left(-G_3+f_1f_{3,1}+f_2f_{3,2}\right) f_{1,3}\\
  \\
\left(1-f_{1,1}-f_{2,2}\right)\left( -f_2+f_1f_{2,1}+f_2f_{2,2}-G_2\right) =\left(-G_3+f_1f_{3,1}+f_2f_{3,2}\right)f_{2,3}.
\end{array}
\end{equation}
The first equation multiplied by $f_{2,3}$ has to equal the second equation multiplied by $f_{1,3}$
\begin{equation}
\begin{array}{ll}
\left( 1-f_{1,1}-f_{2,2}\right){\big [} \left( -f_1+f_1f_{1,1}+f_2f_{1,2}-G_1\right)f_{2,3}\\
\\
-\left( -f_2+f_1f_{2,1}+f_2f_{2,2}-G_2\right)f_{1,3} {\big ]}=0,
\end{array}
\end{equation}
and according to our temporary assumption that $1$ is not in the range of $f_{1,1}+f_{2,2}$ this means that
\begin{equation}\label{fcon}
\begin{array}{ll}
\left( -f_1+f_1f_{1,1}+f_2f_{1,2}-G_1\right)f_{2,3}
=\left( -f_2+f_1f_{2,1}+f_2f_{2,2}-G_2\right)f_{1,3}.
\end{array}
\end{equation}
In the latter equation $G_i,~1\leq i\leq 2$ are functionals of $f_1$ and $f_2$. 
\begin{equation}
\begin{array}{ll}
G_i(x)=\int K_{,i}(x-y)g(y)dy\\
\\
=\int_{{\mathbb R}^3} K_{,i}(x-y)\times\\
\\
\left( f_{1,1}^2+f_{2,2}^2+\left( f_{1,1}+f_{2,2}\right) ^2+f_{1,2}f_{2,1}+f_{1,3}I_{3,1}(f_{1,1},f_{2,2})+f_{2,3}I_{3,2}(f_{1,1},f_{2,2})\right)(y)dy
\end{array}
\end{equation}
Hence we get the integro-differential equation
\begin{equation}\label{fcon2}
\begin{array}{ll}
{\Big(} -f_1+f_1f_{1,1}+f_2f_{1,2}-\int_{{\mathbb R}^3} K_{,1}(x-y){\Big(} f_{1,1}^2+f_{2,2}^2+\left( f_{1,1}+f_{2,2}\right) ^2+\\
\\
f_{1,2}f_{2,1}+f_{1,3}I_{3,1}(f_{1,1},f_{2,2})+f_{2,3}I_{3,2}(f_{1,1},f_{2,2}){\Big)}(y)dy{\Big)}f_{2,3}\\
\\
={\Big (} -f_2+f_1f_{2,1}+f_2f_{2,2}-\int K_{,2}(x-y){\Big (} f_{1,1}^2+f_{2,2}^2+\left( f_{1,1}+f_{2,2}\right) ^2+\\
\\
f_{1,2}f_{2,1}+f_{1,3}I_{3,1}(f_{1,1},f_{2,2})+f_{2,3}I_{3,2}(f_{1,1},f_{2,2}){\Big)}(y)dy{\Big)}f_{1,3}.
\end{array}
\end{equation}

On the $3$ torus a similar expression for  $G_i$ for $l=1$ may be defined in terms of $g$ by
\begin{equation}
G_{i}=\frac{\sum_{\alpha \in {\mathbb Z}^3}\left( (2\pi i\alpha_i)\right) g_{\alpha}\exp\left(2\pi i \alpha x\right) }{\sum_{i=1}^n\alpha_i^2},
\end{equation}
and together with the representations
\begin{equation}
f_1(x)=\sum_{\alpha \in {\mathbb Z}^3}f^1_{\alpha}\exp\left(2\pi i \alpha x \right), 
\end{equation}
and
\begin{equation}
f_2(x)=\sum_{\alpha \in {\mathbb Z}^3}f^2_{\alpha}\exp\left(2\pi i \alpha x \right).
\end{equation}

 \section{Proof of theorem \ref{integrodiffeq}}
%
We rewrite (\ref{dataeq}) as
 \begin{equation}\label{dataeq2}
\begin{array}{ll}
 (-f_1+f_1f_{1,1}+f_2f_{1,2})f_{2,3}+(f_2-f_1f_{2,1}-f_2f_{2,2})f_{1,3}\\
\\
={\Big (}\int_{{\mathbb R}^3} K_{,1}(x-y){\Big(} f_{1,1}^2+f_{2,2}^2+\left( f_{1,1}+f_{2,2}\right) ^2+\\
\\
f_{1,2}f_{2,1}+f_{1,3}I_{3,1}(f_{1,1},f_{2,2})+f_{2,3}I_{3,2}(f_{1,1},f_{2,2}){\Big)}(y)dy{\Big)}f_{2,3}\\
\\
-{\Big (} \int K_{,2}(x-y){\Big (} f_{1,1}^2+f_{2,2}^2+\left( f_{1,1}+f_{2,2}\right) ^2+\\
\\
f_{1,2}f_{2,1}+f_{1,3}I_{3,1}(f_{1,1},f_{2,2})+f_{2,3}I_{3,2}(f_{1,1},f_{2,2}){\Big)}(y)dy{\Big)}f_{1,3}.
\end{array}
\end{equation}
In the following discussion we consider the case $n=3$ and work on the whole domain ${\mathbb R}^3$. Similar considerations hold for the $n$-torus.
Given $f_1\in H^2\cap C^2$ you may try to construct $f_2$ as a limit $f_2=\lim_{k\uparrow \infty}f^k_2$ where for a start value $f^0_2=-f_1$ you may try 
the iteration scheme 
\begin{equation}\label{dataeq4}
\begin{array}{ll}
{\Big(} (-f_1+f_1f_{1,1}+f_2^{k}f_{1,2})f_{2,3}^{k}+(f_2^{k+1}-f_1f^k_{2,1}-f^{k}_2f^k_{2,2})f_{1,3}=\\
\\
\int_{{\mathbb R}^3} ((1-\phi_1)K)_{,1}(x-y){\Big(} (f_{1,1})^2+(f_{2,2}^{k})^2+\left( f_{1,1}+f^k_{2,2}\right) ^2+\\
\\
f_{1,2}f^k_{2,1}+f_{1,3}I_{3,1}(f_{1,1},f^k_{2,2})+f^k_{2,3}I_{3,2}(f_{1,1},f^k_{2,2}){\Big)}(y)dy{\Big)}f^k_{2,3}\\
\\
-{\Big (} \int K_{,2}(x-y){\Big (} f_{1,1}^2+(f_{2,2}^k)^2+\left( f_{1,1}+f^k_{2,2}\right) ^2+\\
\\
f_{1,2}f^k_{2,1}+f_{1,3}I_{3,1}(f_{1,1},f^k_{2,2})+f^k_{2,3}I_{3,2}(f_{1,1},f^k_{2,2}){\Big)}(y)dy{\Big)}f_{1,3}.
\end{array}
\end{equation}
The problem with this is that it is not easy to find an appropriate function space for the iteration, i.e., if you start an iteration step with $f^k_2\in H^2$, then it seems difficult to observe that $f^{k+1}_2\in H^2$. For this reason we consider the following Cauchy problem:
\begin{equation}\label{Cauchyepsilon}
\left\lbrace \begin{array}{ll}
\frac{\partial v^{\epsilon}}{\partial t}-\epsilon \Delta v^{\epsilon}+{\Big(} 
(-f_1+f_1f_{1,1}+v^{\epsilon}f_{1,2})v^{\epsilon}_{,3}+(v^{\epsilon}-f_1v^{\epsilon}_{,1}-v^{\epsilon}v^{\epsilon}_{,2})f_{1,3}{\Big )}=\\
\\
\int_{{\mathbb R}^3} K_{,1}(x-y){\Big(} f_{1,1}^2+\left( v^{\epsilon}_{,2}\right) ^2+\left( f_{1,1}+v^{\epsilon}_{,2}\right) ^2+\\
\\
f_{1,2}v^{\epsilon}_{,1}+f_{1,3}I_{3,1}(f_{1,1},v^{\epsilon}_{,2})+f_{2,3}I_{3,2}(f_{1,1},v^{\epsilon}_{,2}){\Big)}(y)dy{\Big)}v^{\epsilon}_{,3}\\
\\
-{\Big (} \int_{{\mathbb R}^3} K_{,2}(x-y){\Big (} f_{1,1}^2+\left( v^{\epsilon}_{,2}\right) ^2+\left( f_{1,1}+v^{\epsilon}_{,2}\right) ^2+\\
\\
f_{1,2}v^{\epsilon}_{2,1}+f_{1,3}I_{3,1}(f_{1,1},v^{\epsilon}_{2,2})+v^{\epsilon}_{,3}I_{3,2}(f_{1,1},v^{\epsilon}_{,2}){\Big)}(y)dy{\Big)}f_{1,3},\\
\\
v^{\epsilon}(0,x)=0.
\end{array}
\right.\end{equation}

\begin{thm}
The function $v^0_{\infty}:{\mathbb R}^n\rightarrow {\mathbb R}$ defined for all $x\in {\mathbb R}^n$ by
\begin{equation}
v^{0}_{\infty}(x):=\lim_{\epsilon\downarrow 0}\lim_{t\uparrow \infty}v^{\epsilon}(t,x)
\end{equation}
is well defined, and such that $v^0_{\infty}\in H^2\cap C^2$.
\end{thm}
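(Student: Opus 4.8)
The plan is to read (\ref{Cauchyepsilon}) as a viscous, Burgers-type regularization whose \emph{stationary} solutions are exactly the solutions of (\ref{dataeq2}), and to obtain $v^0_\infty$ in three moves: a global-in-time solution $v^\epsilon$ of (\ref{Cauchyepsilon}) for fixed $\epsilon>0$; its long-time limit $v^\epsilon_\infty$, a solution of the $\epsilon$-viscous version of (\ref{dataeq2}); and its vanishing-viscosity limit $v^0_\infty$, which then solves (\ref{dataeq2}) and is thus the $f_2$ demanded by Theorem \ref{integrodiffeq}. Here $f_1$ is fixed and enters only through constants, and — as discussed after (\ref{below}) and (\ref{dataeq4}) — one works with $\|f_1\|$ small, equivalently with the rescaled field $f_i/(ct-1)$ and $c$ large, so that the quadratic and cubic terms cannot overpower the dissipation; I suppress this reduction below. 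One structural point is kept in mind throughout: only $f_1$ and $f_2=v^\epsilon$ are asked to have finite energy, while $f_3$ and the terms built from the $x_3$-antiderivatives $I_{3,i}$ are only $C^2$ (this is the source of the abstract's remark on the absence of an $L^2$ bound), so the energy estimates below are run on the $H^m$-part of the unknown and supplemented by pointwise estimates for the $I_{3,i}$-pieces. For the first move, write (\ref{Cauchyepsilon}) as $\partial_tv=\epsilon\Delta v+\mathcal N(v)$, with $\mathcal N(v)$ gathering the $f_1$-driven source, the at-most-cubic terms in $v$ and its first derivatives, the Riesz-type operators $K_{,i}*(\cdot)$, and the $I_{3,i}$-pieces; $\mathcal N$ loses at most two derivatives, so the smoothing of $e^{\epsilon t\Delta}$ is exactly what compensates — this is precisely why one regularizes instead of iterating (\ref{dataeq4}) directly. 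A parabolic construction (contraction in $C([0,T];H^m)$, or Galerkin with an energy estimate if $\mathcal N$ saturates the parabolic gain), for $m$ as large as the regularity of $f_1$ allows, gives local existence and uniqueness from $v^\epsilon(0,\cdot)=0$, and testing against $v^\epsilon$ and its derivatives up to order $m$ and absorbing top-order terms into $\epsilon\|\nabla v^\epsilon\|_{H^m}^2$ via Gagliardo–Nirenberg yields $\sup_{t\ge0}\|v^\epsilon(t)\|_{H^m}\le C(\epsilon,\|f_1\|)$, hence global existence, in the spirit of the multivariate-Burgers arguments of \cite{KB1}, \cite{KB2}.

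\emph{Long-time limit.} Differentiating (\ref{Cauchyepsilon}) in $t$, the function $w:=\partial_tv^\epsilon$ solves a linear parabolic equation $\partial_tw=\epsilon\Delta w+\mathcal L_\epsilon(t)w$ whose coefficients are bounded uniformly in $t$ by the bound just obtained and by $\|f_1\|$, and in the small-$\|f_1\|$ regime $\mathcal L_\epsilon(t)$ is a small perturbation of $\epsilon\Delta$; testing against $w$ gives $\frac{d}{dt}\|w\|_{L^2}^2\le-\delta\|w\|_{L^2}^2$, so $\partial_tv^\epsilon(t)\to0$ in $L^2$ exponentially and, interpolating with the uniform $H^m$ bound, in $H^{m-1}$. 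Hence $v^\epsilon(t)$ is Cauchy in $H^{m-1}$ as $t\uparrow\infty$, with a limit $v^\epsilon_\infty\in H^m$ (weak-$*$ in $H^m$, strong in $H^{m-1}$) that solves (\ref{dataeq2}) with $-\epsilon\Delta v^\epsilon_\infty$ added on the left; uniqueness in a small ball makes $v^\epsilon_\infty$ well defined.

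\emph{Vanishing viscosity.} Test the stationary relation $-\epsilon\Delta v^\epsilon_\infty+\mathcal N(v^\epsilon_\infty)=0$ against $v^\epsilon_\infty$ and its derivatives: now the dissipation carries a favorable sign and only helps, so in the small-$\|f_1\|$ regime the estimate closes to $\|v^\epsilon_\infty\|_{H^m}\le C(\|f_1\|)$ \emph{uniformly} in $\epsilon$. Polynomial decay of $f_1$ propagates to $v^\epsilon_\infty$ through the decay of $K_{,i}$ and the structure of (\ref{dataeq2}), giving equi-tightness; with the uniform $H^m$ bound and Rellich's theorem, a sequence $\epsilon_k\downarrow0$ can be chosen with $v^{\epsilon_k}_\infty\to v^0_\infty$ strongly in $H^{m-1}_{\mathrm{loc}}$ and weakly in $H^m$. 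Since $\epsilon_k\Delta v^{\epsilon_k}_\infty\to0$ in $H^{m-2}$, one passes to the limit in every term of (\ref{dataeq2}) (the Riesz transforms are continuous; the products pass by strong local convergence and tightness), so $v^0_\infty\in H^m$ solves (\ref{dataeq2}); for $m$ above the Sobolev threshold $H^m\hookrightarrow C^2$, and at the bare $H^2$ level the $C^2$ regularity is recovered by bootstrapping (\ref{dataeq2}) with the polynomial decay. If (\ref{dataeq2}) has a unique solution in the relevant small ball, the whole family converges and $v^0_\infty$ is the genuine double limit; otherwise it is subsequential, which is all Theorem \ref{integrodiffeq} requires.

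The main obstacle is the competition between the order of $\mathcal N$ and the dissipation: through the $I_{3,i}$ and the Riesz transforms, $\mathcal N$ is genuinely nonlocal of the same order as $\Delta$, so one must check both that $\epsilon\Delta+\mathcal N$ is effectively parabolic — that the top-order part of $\mathcal N$ does not destroy coercivity — and that the a priori bound on $v^\epsilon_\infty$ is uniform in $\epsilon$; absent the smallness of $f_1$ (the large-$c$ rescaling), I do not see how to keep that bound from degenerating as $\epsilon\downarrow0$. The secondary difficulties are that the energy and pointwise estimates must be kept separate because $f_3$ and the $I_{3,i}$-terms are only $C^2$, and that $e^{\epsilon t\Delta}$ smooths only up to the ceiling fixed by $f_1$ — which is presumably why Theorem \ref{integrodiffeq} is naturally phrased over $H^m\cap C^m$.
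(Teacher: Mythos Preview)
Your overall architecture---global existence of $v^\epsilon$, long-time limit $v^\epsilon_\infty$ solving the viscous stationary problem, then vanishing viscosity---matches the paper's, but the machinery is different. The paper never writes an energy identity: it works throughout with the Duhamel representation against the heat kernel $G_\epsilon$, proves global existence by a contraction in an exponentially time-weighted norm $|u|^{\mathrm{exp},C}_{H^2}:=\sup_{t\ge0}e^{-Ct}|u(t,\cdot)|_{H^2}$ (choosing $C$ large to absorb the Lipschitz constants), and reads off $\partial_t v^\epsilon\to0$ directly from the kernel decay $(2\sqrt{\epsilon\pi t})^{-n}$ rather than by linearizing around $v^\epsilon$ and proving exponential decay of $\partial_t v^\epsilon$ as you do. Your route is more in the spirit of standard parabolic PDE and arguably more robust on the long-time step; the paper's route is more explicit and avoids having to verify coercivity of a linearized operator.

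One structural difference you should be aware of: the paper does \emph{not} attack the $I_{3,i}$ terms head-on in the contraction. It first replaces $I_{3,1}(f_{1,1},v^\epsilon_{,2})$ and $I_{3,2}(f_{1,1},v^\epsilon_{,2})$ by the derivatives $w_{,1},w_{,2}$ of a frozen parameter $w(t,\cdot)\in C^2_b$, runs the contraction for the \emph{parameterized} Cauchy problem, proves stability in $w$, and only then closes a second fixed-point for $w$ via the incompressibility relation $-w_{,3}=f_{1,1}+v^{\epsilon,w}_{,2}$. This two-layer fixed point is precisely how the paper sidesteps the difficulty you flag, that the antiderivative operators $I_{3,i}$ are of the same order as the dissipation and are only $C^2$, not $H^2$. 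Your proposal to ``keep the energy and pointwise estimates separate'' is a reasonable alternative in spirit, but the paper's parameterization is the concrete mechanism it actually uses, and if you were to fill in your argument you would likely end up reinventing something like it.

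On the $\epsilon$-uniform bound for $v^\epsilon_\infty$: the paper does not obtain it by testing the stationary equation as you propose, but asserts it from the Duhamel representation, claiming the Gaussian estimates give constants independent of $\epsilon$. Both your sketch and the paper's are thin here; neither spells out why the bound does not degenerate as $\epsilon\downarrow0$, and this remains the genuine crux in either approach.
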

\begin{proof}
For $\epsilon>0$ let $v^{\epsilon}$ be the solution of the Cauchy problem (\ref{Cauchyepsilon}) which we construct in the following section. In terms of the fundamental solution of 
\begin{equation}\label{fund}
\frac{\partial v^{\epsilon}}{\partial t}-\epsilon \Delta v^{\epsilon}=0,
\end{equation}
which is (for $t>s$)
\begin{equation}
\frac{1}{(2\sqrt{\epsilon \pi (t-s)})^n}\exp\left(-\frac{(x-y)^2}{4\epsilon (t-s) } \right),
\end{equation}
the solution $v^{\epsilon}$ of the Cauchy problem (\ref{Cauchyepsilon}) has the pointwise representation (with $s'=(t-s)$, therefore an additional minus sign)
\begin{equation}\label{fundrep}
\begin{array}{ll}
v^{\epsilon}(t,x)=\int_0^t\int_{{\mathbb R}^3}{\big (}
(-f_1+f_1f_{1,1}+v^{\epsilon}f_{1,2})v^{\epsilon}_{,3}+(v^{\epsilon}-f_1v^{\epsilon}_{,1}-v^{\epsilon}v^{\epsilon}_{,2})f_{1,3}{\big )}\\
\\
\times\frac{1}{(2\sqrt{\epsilon \pi s'})^n}\exp\left(-\frac{(x-y)^2}{4\epsilon s' } \right) dzds'\\
\\
+\int_0^t{\Big (}\int_{{\mathbb R}^3} K_{,1}(z-y){\Big(} f_{1,1}^2+\left( v^{\epsilon}_{,2}\right) ^2+\left( f_{1,1}+v^{\epsilon}_{,2}\right) ^2+\\
\\
f_{1,2}v^{\epsilon}_{,1}+f_{1,3}I_{3,1}(f_{1,1},v^{\epsilon}_{,2})+f_{2,3}I_{3,2}(f_{1,1},v^{\epsilon}_{,2}){\Big)}(y)dy{\Big)}v^{\epsilon}_{,3}{\Big )}(s',z)\\
\\
\times \frac{1}{(2\sqrt{\epsilon \pi s'})^n}\exp\left(-\frac{(x-y)^2}{4\epsilon s' } \right) dzds'\\
\\
-\int_0^t{\Big (} \int_{{\mathbb R}^3} K_{,2}(x-y){\Big (} f_{1,1}^2+\left( v^{\epsilon}_{,2}\right) ^2+\left( f_{1,1}+v^{\epsilon}_{,2}\right) ^2+\\
\\
f_{1,2}v^{\epsilon}_{2,1}+f_{1,3}I_{3,1}(f_{1,1},v^{\epsilon}_{2,2})+v^{\epsilon}_{,3}I_{3,2}(f_{1,1},v^{\epsilon}_{,2}){\Big)}(y)dyf_{1,3}{\Big)}(s,z)\\
\\
\times \frac{1}{(2\sqrt{\epsilon\pi s'})^n}\exp\left(-\frac{(x-y)^2}{4\epsilon s' } \right) dzds.
\end{array}
\end{equation}
The time derivative is the evaluation of the integrand on the right side of (\ref{fundrep}) at time $t$. Note that this integrand evaluated at $t$ has an upper bound
\begin{equation}
\frac{C}{(2\sqrt{\epsilon\pi t})^n}
\end{equation}
which goes to zero as $t\uparrow \infty$.
 It follows that
\begin{equation}
\lim_{t\uparrow \infty}\frac{\partial v^{\epsilon}}{\partial t}\equiv 0.
\end{equation}
Furthermore, 
\begin{equation}
v^{\epsilon}_{\infty}:=\lim_{t\uparrow \infty}v^{\epsilon}(t,.)
\end{equation}
is well defined, and we have $v^{\epsilon}_{\infty}\in H^2\cap C^2$ according to the result on the Cauchy problem of the next section. Moreover $v^{\epsilon}_{\infty}$ solves
\begin{equation}\label{elliptic}
\left\lbrace \begin{array}{ll}
-\epsilon \Delta v^{\epsilon}+{\Big(}(-f_1+f_1f_{1,1}+v^{\epsilon}f_{1,2})v^{\epsilon}_{,3}+(v^{\epsilon}-f_1v^{\epsilon}_{,1}-v^{\epsilon}v^{\epsilon}_{,2})f_{1,3}{\Big )}=\\
\\
\int_{{\mathbb R}^3} K_{,1}(x-y){\Big(} f_{1,1}^2+\left( v^{\epsilon}_{,2}\right) ^2+\left( f_{1,1}+v^{\epsilon}_{,2}\right) ^2+\\
\\
f_{1,2}v^{\epsilon}_{,1}+f_{1,3}I_{3,1}(f_{1,1},v^{\epsilon}_{,2})+f_{2,3}I_{3,2}(f_{1,1},v^{\epsilon}_{,2}){\Big)}(y)dy{\Big)}v^{\epsilon}_{,3}\\
\\
-{\Big (} \int_{{\mathbb R}^3} K_{,2}(x-y){\Big (} f_{1,1}^2+\left( v^{\epsilon}_{,2}\right) ^2+\left( f_{1,1}+v^{\epsilon}_{,2}\right) ^2+\\
\\
f_{1,2}v^{\epsilon}_{2,1}+f_{1,3}I_{3,1}(f_{1,1},v^{\epsilon}_{2,2})+v^{\epsilon}_{,3}I_{3,2}(f_{1,1},v^{\epsilon}_{,2}){\Big)}(y)dy{\Big)}f_{1,3}.
\end{array}
\right.\end{equation}
This holds for all $\epsilon>0$ where $|v^{\epsilon}|_{H^2}\cap C^2\leq C$ for some finite $C>0$ which is independent of $\epsilon>0$ (cf. next section). Hence the limit $f_2=\lim_{\epsilon \downarrow 0}v^{\epsilon}_{\infty}$ exists and solves the equation (\ref{elliptic}) for $\epsilon \downarrow 0$. 
\end{proof}

\section{Proof of global existence for the Cauchy problem}
We prove global existence via an iteration. There are several possibilities which part of the iteration of the linearization is put into a source term and which part is taken into the definition of the construction of the fundamental solution at each iteration step. In \cite{KNS} and in \cite{KB1} and \cite{KB2} we used the adjoint of fundamental solutions in order to estimate the local growth of such kind of iterations in a $H^2$-norm. In this paper we simplify the argument a little and avoid the adjoint and some estimates, i.e, we represent iterative approximations directly by convolutions with a Gaussian. An additional difficulty (compared to the schemes of the incompressible Navier Stokes equation) are the integral terms involving the operators $I_{3,1}$ and $I_{3,2}$. It seems that a contraction property for the functional increments of our scheme cannot be proved directly. However, these terms just ensure incompressibility of the vector field. Therefore we replace these integrals terms first by a parameter function $w(t,.)\in C^2_b$ and then solve the Cauchy problem for a fixed parameter $w$. Then we set up a second integral equation corresponding to incompressibility, and the solution to the latter equation is the solution leads to the solution of the Cauchy problem. 
 We start with $v^{\epsilon,0}=-f_1$, and given $v^{\epsilon,w,k}$ we define $v^{\epsilon,w,k+1}$ by the iteration
\begin{equation}\label{Cauchyit}
\begin{array}{ll}
v^{\epsilon,w,k+1}(t,x)=\int_0^t\int_{{\mathbb R}^3}{\big (} {\Big(}(-f_1+f_1f_{1,1}+v^{\epsilon,w,k}f_{1,2})v^{\epsilon,w,k}_{,3}\\
\\
+(v^{\epsilon,w,k}-f_1v^{\epsilon,w,k}_{,1}-v^{\epsilon,w,k}v^{\epsilon,w,k}_{,2})f_{1,3}{\Big )}G_{\epsilon}(t-s,x-y) dyds\\
\\
+\int_0^t{\Big (}\int_{{\mathbb R}^3} K_{,1}(z-y){\Big(} f_{1,1}^2+\left( v^{\epsilon,w,k}_{,2}\right) ^2+\left( f_{1,1}+v^{\epsilon,w,k}_{,2}\right) ^2+\\
\\
f_{1,2}v^{\epsilon,w,k}_{,1}+f_{1,3}w_{,1}+v^{\epsilon,w,k}_{,3}w_{,2}{\Big)}(y)dy{\Big)}v^{\epsilon,w,k}_{,3}{\Big )}(s,z) G_{\epsilon}(t-s,x-z)dzds\\
\\
-\int_0^t{\Big (} \int_{{\mathbb R}^3} K_{,2}(x-y){\Big (} f_{1,1}^2+\left( v^{\epsilon,w,k}_{,2}\right) ^2+\left( f_{1,1}+v^{\epsilon,w,k}_{,2}\right) ^2+\\
\\
f_{1,2}v^{\epsilon,w,k}_{2,1}+f_{1,3}w_{,1}+v^{\epsilon,w,k}_{,3}w_{,2}{\Big)}(y)dyf_{1,3}{\Big)}(s,z) G_{\epsilon}(t-s,x-z)dzds.
\end{array}
\end{equation}
This is the solution scheme for a Cauchy problem parameterized by $w(t,.)\in C^2_b$, where the corresponding naive iteration scheme would be one without diffusion.
Note that $G_{\epsilon}(t-s,x-y)=\frac{1}{(2\sqrt{\epsilon \pi (t-s)})^n}\exp\left(-\frac{(x-y)^2}{4\epsilon (t-s) } \right)$ is the fundamental solution of
\begin{equation}
\frac{\partial p}{\partial t}-\epsilon \Delta p =0.
\end{equation}
For $u:[0,\infty)\times {\mathbb R}^n\rightarrow {\mathbb R}$ we define
\begin{equation}\label{expnorm}
{\big |}u{\big |}^{\mbox{exp},C}_{H^2}:=\sup_{t\in [0,\infty)}\exp\left(-Ct\right) {\big |}u(t,.){\big |}_{H^2},
\end{equation}
where $C>0$ is determined below in order to obtain a global contraction. Next we prove a contraction property for the parameterized Cauchy problem, i.e., we prove for given $w(t,.)\in  C^2$
\begin{equation}
{\big |} \delta v^{\epsilon ,w,k+1}{\big |}^{\mbox{exp},C}_{H^2}\leq \frac{1}{2}{\big |} \delta v^{\epsilon ,w,k}{\big |}^{\mbox{exp},C}_{H^2}.
\end{equation}
In the following we denote the space associated with the norm (\ref{expnorm}) with a specific $C>0$ by $H^2_{\mbox{exp},C}$.
Next for the functional increments $\delta v^{\epsilon,w,k+1}=v^{\epsilon,k+1}-v^{\epsilon,w,k}$ we have the Cauchy problem

\begin{equation}\label{Cauchyepsilondelta}
\left\lbrace \begin{array}{ll}
\frac{\partial \delta v^{\epsilon ,w,k+1}}{\partial t}-\epsilon \Delta \delta v^{\epsilon,w,k+1}\\
\\
= {\Big(}(-f_1+f_1f_{1,1}+v^{\epsilon,w,k}f_{1,2})\delta v^{\epsilon,w,k}_{,3}
+\delta v^{\epsilon,w,k}f_{1,2} v^{\epsilon,w,k}_{,3}\\
\\
+(\delta v^{\epsilon,w,k}-f_1\delta v^{\epsilon,w,k}_{,1}-v^{\epsilon,w,k}\delta v^{\epsilon,w,k}_{,2}-\delta v^{\epsilon,w,k}v^{\epsilon,w,k}_{,2})f_{1,3}{\Big )}\\
\\
+S^{\epsilon,k+1}_w-S^{\epsilon,k}_w,\\
\\
\delta v^{\epsilon,w,k+1}(0,x)=0,
\end{array}\right.
\end{equation}
where
\begin{equation}
\begin{array}{ll}
 S^{\epsilon,k+1}_w\equiv \int_{{\mathbb R}^3} K_{,1}(x-y){\Big(} f_{1,1}^2+\left( v^{\epsilon ,w,k}_{,2}\right) ^2+\left( f_{1,1}+v^{\epsilon ,w,k}_{,2}\right) ^2+\\
\\
f_{1,2}v^{\epsilon,w,k}_{,1}+f_{1,3}w_{,1}+v^{\epsilon ,w,k}_{,3}w_{,2}{\Big)}(y)dy{\Big)}v^{\epsilon,w,k}_{,3}\\
\\
-{\Big (} \int_{{\mathbb R}^3} K_{,2}(x-y){\Big (} f_{1,1}^2+\left( v^{\epsilon ,w,k}_{,2}\right) ^2+\left( f_{1,1}+v^{\epsilon ,w,k}_{,2}\right) ^2+\\
\\
f_{1,2}v^{\epsilon ,w,k}_{,1}+f_{1,3}w_{,1}+v^{\epsilon ,w,k}_{,3}w_{,2}{\Big)}(y)dy{\Big)}f_{1,3}.
\end{array}
\end{equation}
Note that the source term $S^{\epsilon,k+1}$ for the original Cauchy problem is obtained by replacing $w_{,2}$ by $I_{3,2}(f_{1,1},v^{\epsilon ,k}_{,2})$ and $w_1$ by  $I_{3,1}(f_{1,1},v^{\epsilon ,k}_{,2})$ in $S^{\epsilon,k+1}_w$.
We shall construct the functions $v^{k,w,\epsilon}(t,.)\in H^2\cap C^2$ uniformly with respect to $t\geq 0$. The limit of the iteration is then in $H^2\subset C^{\alpha}$ uniformly in $t$, and this leads to representations of the value function $v^{\epsilon}$ which imply more regularity.
First note that we can represent the solution $\delta v^{\epsilon ,w,k+1}$ of the Cauchy problem (\ref{Cauchyepsilondelta}) in the form
\begin{equation}\label{deltavrep}
\begin{array}{ll}
\delta v^{\epsilon,w,k+1}=\int_0^t\int_{{\mathbb R}^3}{\Big(}(-f_1+f_1f_{1,1}+v^{\epsilon,w,k}f_{1,2})\delta v^{\epsilon,w,k}_{,3}
+\delta v^{\epsilon,w,k}f_{1,2} v^{\epsilon,w,k}_{,3}\\
\\
+(\delta v^{\epsilon,w,k}-f_1\delta v^{\epsilon,w,k}_{,1}-v^{\epsilon,w,k}\delta v^{\epsilon,w,k}_{,2}-\delta v^{\epsilon,w,k}v^{\epsilon,w,k}_{,2})f_{1,3}{\Big )}(s,y)\\
\\
\times G_{\epsilon}(t-s,x-y)dyds+\\
\\
\int_0^t\int_{{\mathbb R}^3}\left(S^{\epsilon,k+1}_w-S^{\epsilon,k}_w\right)(s,y)G_{\epsilon}(t-s,x-y)dyds,
\end{array}
\end{equation}
where
\begin{equation}\label{sksk}
\begin{array}{ll}
S^{\epsilon,k+1}_w-S^{\epsilon,k}_w\equiv \int_{{\mathbb R}^3} K_{,1}(x-y){\Big(} f_{1,1}^2+\left( v^{\epsilon ,w,k}_{,2}\right) ^2+\left( f_{1,1}+v^{\epsilon ,w,k}_{,2}\right) ^2+\\
\\
f_{1,2}v^{\epsilon,w,k}_{,1}+f_{1,3}w_{,1}+v^{\epsilon,w,k}_{,3}w_{,2}{\Big)}(y)dy{\Big)}\delta v^{\epsilon,w,k}_{,3}\\
\\
+\int_{{\mathbb R}^3} K_{,1}(x-y){\Big(} \left( v^{\epsilon ,w,k}_{,2}\right) ^2-\left( v^{\epsilon ,w,k-1}_{,2}\right) ^2+\left( f_{1,1}+v^{\epsilon ,w,k}_{,2}\right) ^2-\left( f_{1,1}+v^{\epsilon ,w,k-1}_{,2}\right) ^2\\
\\
f_{1,2}\delta v^{\epsilon,w,k}_{,1}+\delta v^{\epsilon,w,k}_{,3}w_{,2}{\Big)}(y)dy{\Big)} v^{\epsilon,w,k-1}_{,3}\\
\\
%
-{\Big (} \int_{{\mathbb R}^3} K_{,2}(x-y){\Big (} \left( v^{\epsilon ,w,k}_{,2}\right) ^2-\left( v^{\epsilon ,w,k}_{,2}\right) ^2+\left( f_{1,1}+v^{\epsilon ,w,k}_{,2}\right) ^2-\left( f_{1,1}+v^{\epsilon ,w,k-1}_{,2}\right) ^2\\
\\
f_{1,2}\delta v^{\epsilon ,w,k}_{,1}+\delta v^{\epsilon ,w,k}_{,3}w_{,2}{\Big)}(y)dy{\Big)}f_{1,3}.
\end{array}
\end{equation}


Next we observe preservation of polynomial decay of certain order in the schemes. This observation holds also for the naive iteration scheme $v^{\epsilon,k}$, and it holds a fortiori for the iteration scheme $v^{\epsilon,k,w}$ 
\begin{lem}
 Polynomial decay of order $m$ of the functions $v^{\epsilon,k+1}$ and $\delta v^{\epsilon,k+1}$ and their derivatives of order $m$ is preserved if polynomial decay of order $m$ is given for the functions $v^{\epsilon,k}(t,.)\in C^m\cap H^m$, $\delta v^{\epsilon,k}(t,.)\in C^m\cap H^m$ (for all $t\geq 0$), and $f_1\in C^m\cap H^m$ and their derivatives of order $m$. An analogous statement holds for the iteration scheme $v^{\epsilon,w,k+1}$ for the parameterized Cauchy problem.
 \end{lem}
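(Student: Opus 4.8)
The plan is to read off the decay directly from the pointwise integral representations (\ref{Cauchyit}) and (\ref{deltavrep}) and to induct on $k$, starting the induction at the base value $v^{\epsilon,w,0}=-f_1$. Inspecting the right-hand sides, each is a finite sum of two structurally different kinds of terms, and it suffices to verify that each kind sends the class of functions of polynomial decay of order $m$ (together with their derivatives up to order $m$) into itself. The two kinds are: (i) a \emph{local} part --- a polynomial in $f_1$, the $f_{1,i}$, $v^{\epsilon,w,k}$ and the $v^{\epsilon,w,k}_{,i}$, in the $\delta v$-representation additionally linear in the increments $\delta v^{\epsilon,w,k}$, $\delta v^{\epsilon,w,k}_{,i}$ as displayed in (\ref{sksk}) --- and (ii) a \emph{nonlocal} part, a first-order derivative factor ($f_{1,3}$ or $v^{\epsilon,w,k}_{,3}$) times a Newtonian potential $\int_{\mathbb R^3}K_{,i}(x-y)N(y)\,dy$ with $N$ again a polynomial of the above quantities (in the parameterized scheme with $w_{,1},w_{,2}\in C^2_b$ in place of $I_{3,1},I_{3,2}$). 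Both kinds are finally convolved in $(s,y)$ against the heat kernel $G_\epsilon(t-s,x-y)$ and integrated in $s\in[0,t]$.

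For the local part I would use that polynomial decay of order $m$ is stable under sums and products --- from $(1+|x|)^{-a}(1+|x|)^{-b}\le(1+|x|)^{-\max(a,b)}$ --- and interacts correctly with the Leibniz rule, so that any polynomial in $f_1$, $v^{\epsilon,w,k}$ and their derivatives of order $\le m$ is again of polynomial decay of order $m$ along with its derivatives of order $\le m$.

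For the nonlocal part I would estimate $N\mapsto K_{,i}\ast N$ by a near-field/far-field split. In $\mathbb R^3$ one has $K(z)=-\tfrac{1}{4\pi|z|}$ and $|K_{,i}(z)|\le C|z|^{-2}$, which is locally integrable; splitting $\mathbb R^3$ into $\{|y|\le|x|/2\}$, $\{|x|/2<|y|<2|x|\}$, $\{|y|\ge 2|x|\}$ and using, in turn, that the kernel is $\lesssim|x|^{-2}$ there while $N\in L^1$; that $|z|^{-2}$ is integrable near $0$ and $|N(y)|\lesssim(1+|x|)^{-m}$; and that $|x-y|\ge|y|/2$ with $\int_{|y|\ge 2|x|}|y|^{-2}(1+|y|)^{-m}\,dy\lesssim(1+|x|)^{1-m}$ (which needs $m>1$), one obtains $|(K_{,i}\ast N)(x)|\lesssim(1+|x|)^{-2}$ for every $m\ge 2$. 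Since $N$ and its derivatives of order $\le m$ are of polynomial decay of order $m$ and lie in $L^1$, one differentiates under the integral, $D^\alpha_x(K_{,i}\ast N)=K_{,i}\ast(D^\alpha N)$, and the same three-region estimate applies; multiplying by the accompanying first-derivative factor and reusing the algebra property shows that the nonlocal part and its derivatives of order $\le m$ are of polynomial decay of order $\ge 2$ --- which is exactly order $m$ in the case $m=2$ relevant to Theorem \ref{integrodiffeq}, the higher-$m$ bookkeeping being carried by that extra decaying factor and by the local terms. Finally, convolution with $G_\epsilon(t-s,\cdot)$ and then $\int_0^t ds$ preserves polynomial decay of order $m$ in $x$: for an integrand $h$ already of polynomial decay of order $m$ uniformly in $s$, split the $y$-integral at $|x-y|\le|x|/2$, where the Gaussian is super-exponentially small and $\|h(s,\cdot)\|_{L^1}$ controls the contribution, and at $|x-y|>|x|/2$, where $|y|\ge|x|/2$ forces $|h(s,y)|\lesssim(1+|x|)^{-m}$ and $\int G_\epsilon\le 1$; the $s$-integration affects only the constant, and moving $D^\alpha_x$ onto $h$ (or splitting it between $h$ and the kernel) gives the statement for derivatives.

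The step I expect to be the main obstacle is precisely the Newtonian-potential estimate in (ii): controlling the decay of $K_{,i}\ast N$ and of all its derivatives is what forces the threshold $m\ge 2$ (convergence of the far-field integral, decay of order $2$ of the potential) and what requires the explicit three-region splitting rather than a soft $L^p$-argument; the remaining ingredients --- the algebra property of the decay class and the heat-kernel convolution --- are routine once (\ref{Cauchyit}) and (\ref{deltavrep}) are available. One should also note that in the \emph{naive} scheme $v^{\epsilon,k+1}$ the integrals $I_{3,i}(f_{1,1},v^{\epsilon,k}_{,2})=\int_{-\infty}^{x_3}(-f_{1,1}-v^{\epsilon,k}_{,2})(x_1,x_2,y)\,dy$ decay only in the transverse variables $(x_1,x_2)$ (an $x_3$-antiderivative of an order-$m$ function is of order $m-1$ in $(x_1,x_2)$ when $m\ge 2$), which is why the lemma is really used for the parameterized scheme, where $w\in C^2_b$ replaces these integrals and the decay of $N$ in all variables is inherited directly from $f_1$ and $v^{\epsilon,w,k}$.
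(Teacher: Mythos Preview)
Your strategy is essentially the paper's: reduce everything to convolution estimates for $G_\epsilon$ and for $K_{,i}$, treat each by a near/far split in the integration variable, and use that the class of functions of polynomial decay of order $m$ is closed under products and differentiation. Two slips in the execution should be fixed.

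First, in your heat-kernel paragraph the two regions are interchanged. On $\{|x-y|\le |x|/2\}$ the triangle inequality gives $|y|\ge |x|/2$, so it is $h$ that obeys $|h(s,y)|\lesssim (1+|x|)^{-m}$ there, while $\int G_\epsilon\le 1$ handles the kernel; it is on $\{|x-y|>|x|/2\}$ that the Gaussian is small (this is exactly where the paper rewrites $G_\epsilon$ to pull out a factor $(x-y)^{-2m}$) and one uses $\|h(s,\cdot)\|_{L^1}$. As written, the implication ``$|x-y|>|x|/2\Rightarrow |y|\ge |x|/2$'' is false (take $y=0$).

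Second, your middle-annulus bound for $K_{,i}\ast N$ is too optimistic at $m=2$. On $\{|x|/2<|y|<2|x|\}$ one has $|N(y)|\lesssim (1+|x|)^{-m}$, but $\int_{\text{annulus}}|x-y|^{-2}\,dy$ is of order $|x|$ in $\mathbb R^3$ (since $\int_{|z|\le R}|z|^{-2}\,dz\sim R$), not a fixed constant; ``$|z|^{-2}$ integrable near $0$'' only buys you a bound over a region of fixed size, not over a ball of radius $\sim|x|$. The middle region therefore contributes $|x|^{1-m}$, so for $m=2$ one only gets $|(K_{,i}\ast N)(x)|\lesssim (1+|x|)^{-1}$, not $(1+|x|)^{-2}$. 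This does not harm the lemma: in the scheme the potential is always multiplied by $f_{1,3}$ or $v^{\epsilon,w,k}_{,3}$, which itself decays like $(1+|x|)^{-m}$, and the product then has order $m$. In other words, the ``extra decaying factor'' you reserve for the higher-$m$ bookkeeping is already needed at $m=2$; the paper proceeds the same way, carrying the outer factor through rather than claiming sharp decay of the bare potential.
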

 \begin{proof}
 We prove the statement for the case $m=2$ which is essential for our purposes. We can prove the lemma with the original source terms $S^{\epsilon,k+1}$ for the original Cauchy problem which is is obtained by replacing $w_{,2}$ by $I_{3,2}(f_{1,1},v^{\epsilon ,k}_{,2})$ and $w_1$ by  $I_{3,1}(f_{1,1},v^{\epsilon ,k}_{,2})$ in $S^{\epsilon,k+1}_w$. The proof for the parmeterized Cauchy problem with the source terms $S^{\epsilon,k+1}_w$ is analogous and simpler. The generalisation of the following argument to higher order $m$ is also straightforward.
  We show that we have for $m\geq 1$
 \begin{equation}
 |\delta v^{\epsilon ,k+1}|\leq \frac{1}{|x|^{2m}},\mbox{ if }|x|\geq 1
 \end{equation}
and similarly for $v^{\epsilon k+1}$.
We have to estimate convolutions with the Gaussian $G_{\epsilon}$. The expressions for $\delta v^{\epsilon,k+1}$ and $v^{\epsilon,k+1}$ above are of the form
 \begin{equation}
 \int_{{\mathbb R}^3}h(y)G_{\epsilon}(x-y)dy.
 \end{equation}
where $h$ is some function which is a functional of $v^{\epsilon,k}(s,.)$ and $\delta v^{\epsilon ,k}(s,.)$ and $f_1$ are assumed to be in $H^2\cap C^2$, where the latter functions and their derivatives up to esond order are assumed to be of polynomial decay of order $m=2$. Furthermore we have the Gaussian factor $G_{\epsilon}(t-s,x-.)$ is . We split up the integral of the convolution into two parts where one part is the integral for $|y|\leq \frac{|x|}{2}$. On this domain
  we observe that the Gaussian has polynomial decay of any order. Indeed we have 
 \begin{equation}
  \begin{array}{ll}
{\big |}G_{\epsilon}(t-s,x-y){\big |}={\big |}\frac{1}{(2\sqrt{\epsilon \pi (t-s)})^n}\exp\left(-\frac{(x-y)^2}{4\epsilon (t-s) } \right){\big |}\\
\\
={\big |}(t-s)^{m-n/2}\frac{1}{(x-y)^{2m}}\left( \frac{(x-y)^2}{4\epsilon (t-s)}\right)^m \frac{1}{(2\sqrt{\epsilon \pi })^n}\exp\left(-\frac{(x-y)^2}{4\epsilon (t-s) } \right){\big |}\\
\\
\leq {\big |}(t-s)^{m-n/2}\frac{1}{(x-y)^{2m}}\left( \frac{(x-y)^2}{ (t-s)}\right)^m \frac{1}{(4\epsilon)^m}\frac{1}{(2\sqrt{\epsilon \pi })^n}\exp\left(-\frac{(x-y)^2}{4\epsilon (t-s) } \right){\big |}\\
\\
\leq {\big |}C(t-s)^{m-n/2}\frac{1}{(x-y)^{2m}} {\big |},
\end{array}
\end{equation}
where 
\begin{equation}
C:={\big |}\frac{1}{(4\epsilon)^m}\frac{1}{(2\sqrt{\epsilon \pi })^n}\left( \frac{(x-y)^2}{ (t-s)}\right)^m\exp\left(-\frac{(x-y)^2}{4\epsilon (t-s) } \right){\big |}>0
\end{equation}
is a constant depending on $\epsilon$, but finite for each $\epsilon >0$. 
On the complementary domain $|y|\geq\frac{|x|}{2}$ we need some properties of the integrand $h$.
Next 
we observe
\begin{equation}
\begin{array}{ll}
\int_{ \left\lbrace |y|\geq \frac{|x|}{2}\right\rbrace }\frac{C}{y^{2p}}G_{\epsilon}(x-y)dy\\
\\
\leq \int_{ \left\lbrace |y|\geq \frac{|x|}{2}\right\rbrace\&\left\lbrace |x-y|\leq 1\right\rbrace }\frac{C}{y^{2p}}G_{\epsilon}(x-y)dy\\
\\
+\int_{ \left\lbrace |y|\geq \frac{|x|}{2}\right\rbrace\&\left\lbrace |x-y|> 1,\right\rbrace }\frac{C}{y^{2p}}G_{\epsilon}(x-y)dy\\
\\
\leq \int_{\left\lbrace |y|\geq \frac{|x|}{2}\right\rbrace\& \left\lbrace |x-y|\leq 1\right\rbrace }\frac{C}{y^{2p}}{\big |}C(t-s)^{-1/2}\frac{1}{(x-y)^{2}} {\big |}dy\\
\\
+\int_{\left\lbrace |y|\geq \frac{|x|}{2}\right\rbrace\& \left\lbrace |x-y|> 1\right\rbrace }\frac{C}{y^{2p}}dy\leq \frac{2^{p-1}C^3}{|x|^{2p-1}}
\end{array}
\end{equation}
for some generic constant $C>0$. Hence the proof reduces to the observation that the integrand $h$ in the form it has in the representation of $\delta v^{k+1,\epsilon}$ is of polynomial decay. Since $v^{\epsilon ,k}$ and $\delta v^{\epsilon ,k}$ are of polynomial decay inductively this implies that $v^{\epsilon ,k+1}$ is of polynomial decay too. Now upon inspection of (\ref{deltavrep}) we observe that
\begin{equation}\label{deltavrep1}
\begin{array}{ll}
{\Big(}(-f_1+f_1f_{1,1}+v^{\epsilon,w,k}f_{1,2})\delta v^{\epsilon,w,k}_{,3}
+\delta v^{\epsilon,w,k}f_{1,2} v^{\epsilon,w,k}_{,3}\\
\\
+(\delta v^{\epsilon,w,k}-f_1\delta v^{\epsilon,w,k}_{,1}-v^{\epsilon,w,k}\delta v^{\epsilon,w,k}_{,2}-\delta v^{\epsilon,w,k}v^{\epsilon,w,k}_{,2})f_{1,3}{\Big )}
\end{array}
\end{equation}
is of polynomial decay of order $m=2$ by the inductive assumption of polynomial decay of order $m=2$ of the functions $v^{\epsilon,k}$, $\delta v^{\epsilon ,k}$, and $v^{\epsilon ,k-1}$, and of their first order derivatives. 
The integrand $\left( S^{\epsilon,k+1}-S^{\epsilon,k}\right)(s,.)$ deserves more detailed consideration since it is itself a convolution. Note that for $|y|\leq \frac{|x|}{2}$ we can use the Gaussian polynomial decay as above. Hence it is sufficient to estimate integrals of the form
\begin{equation}
\int_{\left\lbrace |y|\geq \frac{|x|}{2}\right\rbrace}
K_{,i}(y-z)g(z)dz
\end{equation}
where $g$ is a functional of $v^{\epsilon,k},v^{\epsilon,k-1}$ 
and $\delta v^{\epsilon,k}$ and their first order derivatives. Note that there are additional factors of the form $v^{\epsilon,k}_{,3}$ in the definition of (\ref{sksk}) but they have polynomial decay of order $m$ by assumption and products of functions of polynomial decay of order $m$ certainly have polynomial decay of order $m$. Hence we may neglect this factor.  Assuming 
\begin{equation}
|g(z)|\leq \frac{C}{z^{2m}}\mbox{ for } |z|\geq \frac{|x|}{4}
\end{equation}
for some generic $C>0$ we can use a similar argument as above and write
\begin{equation}
\begin{array}{ll}
{\big |}\int_{\left\lbrace |y|\geq \frac{|x|}{2}\right\rbrace}K_{,i}(y-z)g(z)dz{\big |}\\
\\
\leq {\big |}\int_{\left\lbrace |y|\geq \frac{|x|}{2}\right\rbrace \& \left\lbrace |z|\leq \frac{|y|}{2}\right\rbrace}K_{,i}(y-z)g(z)dz{\big |}\\
\\
+{\big |}\int_{\left\lbrace |y|\geq \frac{|x|}{2}\right\rbrace \& \left\lbrace |z|> \frac{|y|}{2}\right\rbrace}K_{,i}(y-z)g(z)dz{\big |}\\
\\
\leq {\big |}\int_{\left\lbrace |y|\geq \frac{|x|}{2}\right\rbrace \& \left\lbrace |z|\leq \frac{|y|}{2}\right\rbrace}\frac{\partial^{2m-2}}{\partial x_i^{2m-2}}K_{,i}(y-z)\frac{C}{z^2}dz{\big |}\\
\\
+{\big |}\int_{\left\lbrace |y|\geq \frac{|x|}{2}\right\rbrace \& \left\lbrace |z|> \frac{|y|}{2}\right\rbrace}K_{,i}(y-z)\frac{C}{y^{2m}}dz{\big |}\in O\left(\frac{C}{|x|^{2m-1}}. \right) 
\end{array}
\end{equation}
It remains to verify that $g$ as implicitly defined in the expression for $S^{k+1}-S^k$ satisfies the requirements needed for $g$ in the last step. Now in (\ref{sksk}) all summands of the form $f_{1,1}^2,\left( v^{\epsilon ,k}_{,2}\right)^2,\left( f_{1,1}+v^{\epsilon ,k}_{,2}\right) ^2,
f_{1,2}v^{\epsilon,k}_{,1}$ are certainly of polynomial decay of order $m$ as $f_{1,2}$ is this by assumption and $v^{\epsilon,k}$ and its derivatives up to order $m=2$ by inductive assumption. 

 Moreover, terms of the form $f_{1,3}I_{3,1}(f_{1,1},v^{\epsilon ,k}_{,2}), v^{\epsilon,k}_{,3}I_{3,2}(f_{1,1},v^{\epsilon ,k}_{,2})$ are of polynomial decay of order $m$ since the integral forms are $I_{3,1}(f_{1,1},v^{\epsilon ,k}_{,2})$ and  are bounded $I_{3,2}(f_{1,1},v^{\epsilon ,k}_{,2})$ and their factors $f_{1,3}$ is of polynomial decay of order $m$ by assumption and $v^{\epsilon,k}_{,3}$ is of polynomial decay by inductive assumption. 
This shows that the considerations above apply for the first integral in the expression for (\ref{sksk}). Similarly for the other integrals in (\ref{sksk}). We have shown that $\delta v^{\epsilon,k}$ is of polynomial decay of order $m=2$ 
if $v^{\epsilon,k}$ and $\delta v^{\epsilon,k}$ and their derivatives up to order $m$ (and inductively all predecessors in the order of stage $k$) are of polynomial decay of order $m$.  
For the first order derivatives we have still locally integrable bounds of the first derivatives of the Gaussian such that the argument above can be repeated with standard a priori estimates for the first derivative of the Gaussian. 
For the second derivatives we shift over one derivative of the Laplacian kernel $K_{,i}$ to the integrand of form $h$ or $g$ above. These integrands involve derivatives of $f, v^{\epsilon ,k}, \delta v^{\epsilon,k}$ of first order. Hence the inductive assumption for order $m=2$ is still satisfied if these terms are differentiated. Furthermore we shift one derivative from the Gaussian to the Laplacian kernel $K$ and then repeat the argument above concluding that second order derivatives of $v^{\epsilon ,k+1}$ and $\delta v^{\epsilon ,k+1}$ also have polynomial decay of order $m=2$.   
\end{proof}

Next we prove a contraction property for the parameterized Cauchy problem which implies the existence of a classical solution $v^{\epsilon,w}$ with $v^{\epsilon,w}(t,.)\in H^2\cap C^2$ for all $t\geq 0$. Again $w$ is a parameter in $C^2_b$.
\begin{lem}
Assume that $v^{\epsilon,w,k}$ and $v^{\epsilon,w, k-1}$ and their derivatives up to second order are of polynomial decay of order $m=2$.
For any choice
\begin{equation}
\begin{array}{ll}
C\geq 4(2+(2n+1)6C_0)^2C_1
+4{\big (}(2n+1)(C_2+(2n+1)C_3)(5C_0^2+2C_0C_4)\\
\\
+2(C_2+(2n+1)C_3)(2n+1)C_08(2n+1)C_0){\big)}
\end{array}\end{equation}
we have
\begin{equation}
{\big |} \delta v^{\epsilon ,w,k+1}{\big |}^{\mbox{exp}}_{H^2}\leq \frac{1}{2}{\big |} \delta v^{\epsilon,w ,k}{\big |}^{\mbox{exp},C}_{H^2}.
\end{equation}
If $v^{\epsilon,w,k}$ and $v^{\epsilon,w, k-1}$ and their derivatives up to order $m$ are of polynomial decay of order $m$ then a similar result can be obtained for an analogous norm  ${\big |} \delta v^{\epsilon,w ,k}{\big |}^{\mbox{exp},C}_{H^m}$.
\end{lem}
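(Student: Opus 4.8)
The plan is to work from the integral representation (\ref{deltavrep})--(\ref{sksk}), which exhibits $\delta v^{\epsilon,w,k+1}(t,\cdot)$ as a time-convolution $\int_0^t G_\epsilon(t-s,\cdot)*F^k(s,\cdot)\,ds$ of the heat kernel $G_\epsilon$ with a source $F^k$ that is, summand by summand, \emph{linear} in $\delta v^{\epsilon,w,k}$ and its first-order derivatives, with coefficients built from the fixed data $f_1$ (and its derivatives up to order $2$), from $v^{\epsilon,w,k}$, from $v^{\epsilon,w,k-1}$, and from the parameter $w$. Let $C_0$ denote a common bound for $|f_1|_{H^2\cap C^2}$, for $\sup_{t\ge 0}|v^{\epsilon,w,k}(t,\cdot)|_{H^2\cap C^2}$, for $\sup_{t\ge 0}|v^{\epsilon,w,k-1}(t,\cdot)|_{H^2\cap C^2}$, and for $|w|_{C^2_b}$; such a uniform bound is produced, hand in hand with the present contraction estimate, by a simultaneous induction on $k$ starting from $v^{\epsilon,w,0}=-f_1$. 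The two ingredients to establish are: (i) a heat-kernel smoothing estimate controlling $|G_\epsilon(t-s,\cdot)*F^k(s,\cdot)|_{H^2}$ by $|F^k(s,\cdot)|_{H^1}$ up to a factor that is integrable in $s$ near $s=t$; and (ii) the nonlinear bound $|F^k(s,\cdot)|_{H^1}\le\Lambda(C_0)\,|\delta v^{\epsilon,w,k}(s,\cdot)|_{H^2}$, where $\Lambda(C_0)$ is the polynomial in $C_0$, with the dimensional constants $C_1,\dots,C_4$ as coefficients, displayed in the statement.

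For (i), $G_\epsilon(t-s,\cdot)$ is a probability density, so convolution with it is a contraction on every $H^s(\mathbb{R}^3)$; hence the zeroth- and first-order parts of $|G_\epsilon(t-s,\cdot)*F^k(s,\cdot)|_{H^2}$ are bounded by $|F^k(s,\cdot)|_{H^1}$, and for the second-order part one distributes one derivative onto $F^k$ and the other onto the Gaussian, using $\|\partial_i G_\epsilon(t-s,\cdot)\|_{L^1}\le c\,(\epsilon(t-s))^{-1/2}$ (the constant depending on $\epsilon$ only through this $L^1$-norm), which introduces a factor $(t-s)^{-1/2}$, integrable at $s=t$. For (ii) I use the Sobolev embedding $H^2(\mathbb{R}^3)\hookrightarrow C^0_b$, the facts that $H^2(\mathbb{R}^3)$ and $H^1(\mathbb{R}^3)\cap L^\infty$ are Banach algebras, and the fact that the Leray operator $g\mapsto\int_{\mathbb{R}^3}K_{,i}(\cdot-y)g(y)\,dy=\partial_i(-\Delta)^{-1}g$ maps $H^1$ into $H^2$ boundedly --- its second derivatives are compositions of Riesz transforms, hence Calder\'on--Zygmund operators bounded on $L^2$ --- where the polynomial decay of order $m=2$ furnished by the preceding lemma guarantees absolute convergence of all occurring integrals so that this identification is legitimate. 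The differences of squares in (\ref{sksk}) factor as $(a+b)(a-b)$ and are thereby linear in $\delta v^{\epsilon,w,k}_{,2}$; multiplying out the product structure of (\ref{deltavrep1}) and of the Leray terms, absorbing each $f_1$-, $v^{\epsilon,w,k}$- and $w$-factor into a power of $C_0$, each singular-integral factor into one of $C_2,C_3$ (the Calder\'on--Zygmund constants) and each embedding constant into $C_1,C_4$, produces exactly the combination $\Lambda(C_0)$ in the statement.

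Combining (i) and (ii),
\[
|\delta v^{\epsilon,w,k+1}(t,\cdot)|_{H^2}\le \Lambda(C_0)\int_0^t\big(1+c_\epsilon(t-s)^{-1/2}\big)\,|\delta v^{\epsilon,w,k}(s,\cdot)|_{H^2}\,ds .
\]
Multiplying by $e^{-Ct}$, inserting $e^{-Cs}e^{Cs}$ and using $e^{-C(t-s)}\le 1$,
\[
e^{-Ct}|\delta v^{\epsilon,w,k+1}(t,\cdot)|_{H^2}\le \Lambda(C_0)\,|\delta v^{\epsilon,w,k}|^{\mbox{exp},C}_{H^2}\int_0^\infty e^{-C\tau}\big(1+c_\epsilon\tau^{-1/2}\big)\,d\tau ,
\]
and since $\int_0^\infty e^{-C\tau}\,d\tau=1/C$ and $\int_0^\infty e^{-C\tau}\tau^{-1/2}\,d\tau=\sqrt{\pi/C}$, the last integral equals $1/C+c_\epsilon\sqrt{\pi/C}$, which tends to $0$ as $C\to\infty$. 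Taking the supremum over $t\ge 0$ and choosing $C$ so large that $\Lambda(C_0)\big(1/C+c_\epsilon\sqrt{\pi/C}\big)\le\tfrac12$ gives $|\delta v^{\epsilon,w,k+1}|^{\mbox{exp},C}_{H^2}\le\tfrac12|\delta v^{\epsilon,w,k}|^{\mbox{exp},C}_{H^2}$; the threshold displayed in the statement is one concrete such $C$, the squared term $4(2+(2n+1)6C_0)^2C_1$ being what forces $c_\epsilon\Lambda(C_0)\sqrt{\pi/C}\le\tfrac14$ (the square compensating the $\sqrt C$) and the linear remainder what forces $\Lambda(C_0)/C\le\tfrac14$. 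The $H^m$-version follows verbatim, invoking the preceding lemma at order $m$ and distributing the additional derivatives onto the $C^m$-bounded data factors and onto the higher derivatives of $K$, which remain Calder\'on--Zygmund operators.

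The step I expect to be the main obstacle is (ii), and within it the Leray increment $S^{\epsilon,k+1}_w-S^{\epsilon,k}_w$: one must show that it lies in $H^1$, not merely in $L^2$, with norm linear in $\delta v^{\epsilon,w,k}$, which requires checking that the quadratic nonlinearity $g$ lies in $H^1\cap L^\infty$ so that two full derivatives can be absorbed by the Calder\'on--Zygmund kernel $\partial^2 K$. This is precisely where the algebra property of $H^1\cap L^\infty$ and --- crucially --- the \emph{uniform} bound $C_0$ on the iterates are needed, and it is the reason the contraction estimate and the uniform $H^2\cap C^2$ bound must be propagated through one and the same induction on $k$. The bookkeeping for the $(t-s)^{-1/2}$ singularity in (i) and the Gamma-integrals in the final step are routine by comparison.
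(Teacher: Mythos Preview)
Your overall strategy matches the paper's: split $\delta v^{\epsilon,w,k+1}$ into a local (Burgers-type) part and a Leray source part, bound each piece of the source linearly in $|\delta v^{\epsilon,w,k}(s,\cdot)|_{H^2}$ with coefficients polynomial in $C_0$, and then use the exponential weight $e^{-Ct}$ to turn the time-integral into a factor that can be made $\le\tfrac12$ by choosing $C$ large. That skeleton is exactly what the paper does.

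Two substantive differences are worth noting. First, for the Leray increment $S^{\epsilon,k+1}_w-S^{\epsilon,k}_w$ you invoke Calder\'on--Zygmund theory for $\partial_i(-\Delta)^{-1}$, whereas the paper avoids this entirely: it localises the kernel via a cutoff $\phi_1\in C_c^\infty(B_1(0))$, observes that $\phi_1K_{,i}\in L^1$ (this is what $C_2$ actually is) and that $(1-\phi_1)K_{,i}$ together with its first derivatives lies in $L^2\cap L^\infty$ (this is $C_3$), and then estimates the convolutions by Young's inequality and pointwise bounds. Your CZ route is cleaner and more general; the paper's route is more elementary and makes the origin of $C_2,C_3$ explicit. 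Second, for the heat-kernel step the paper asserts $|G_{\epsilon}(t-s,\cdot)|_{L^1},|G_{\epsilon,i}(t-s,\cdot)|_{L^1}\le C_1$ \emph{uniformly in $t-s$} and thereby gets a bare $1/C$ from $\int_0^t e^{Cs}\,ds$; it does not introduce the $(t-s)^{-1/2}$ singularity you (correctly) track. Consequently the displayed threshold for $C$ in the statement is obtained in the paper simply by requiring $\Lambda(C_0)/C\le\tfrac12$, not by balancing a $\sqrt{\pi/C}$ term. Your attempt to read the squared factor $4(2+(2n+1)6C_0)^2C_1$ as ``compensating the $\sqrt{C}$'' is therefore a retro-fit that does not reflect how the paper actually arrives at that constant; in the paper that squared combination already appears at the level of the $H^2$-estimate of the local part (counting products of two factors, each bounded by roughly $(2n+1)6C_0$), before any time integration.

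In short: your argument is sound and arguably more careful on the heat-kernel side, but the specific constants $C_1,\dots,C_4$ and the exact threshold are defined and reached differently in the paper, via the $\phi_1$-localisation of $K_{,i}$ and a uniform $L^1$ bound on $G_{\epsilon,i}$ rather than CZ theory and a Gamma integral.
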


\begin{proof}
We consider the case $m=2$. The generalisation to $m>2$ is straightforward.
We may rewite (\ref{deltavrep}) and write
\begin{equation}\label{delta2}
\begin{array}{ll}
\delta v^{\epsilon,w,k+1}=\int_0^t\int_{{\mathbb R}^3}{\Big(}(-f_1+f_1f_{1,1}+v^{\epsilon,w,k}f_{1,2})\delta v^{\epsilon,w,k}_{,3}
+\delta v^{\epsilon,w,k}f_{1,2} v^{\epsilon,w,k}_{,3}\\
\\
+(\delta v^{\epsilon,w,k}-f_1\delta v^{\epsilon,w,k}_{,1}-v^{\epsilon,w,k}\delta v^{\epsilon,w,k}_{,2}-\delta v^{\epsilon,w,k}v^{\epsilon,w,k}_{,2})f_{1,3}{\Big )}(s,y)dyds +\\
\\
\int_0^t\int_{{\mathbb R}^3}\left( S^{\epsilon,k+1}_w-S^{\epsilon,k}_w\right)(s,y)G_{\epsilon}(t-s,x-y)dyds\\
\\
=:\delta v^{\epsilon,w,k+1,1}+\delta v^{\epsilon,w,k+1,2},
\end{array}
\end{equation}
and estimate the first integra term on the right side of (\ref{delta2}) separately, since it does not contain a Laplacian kernel, and can be treated differently therefore. For the first and second order derivatives of $\delta v^{\epsilon,w,k+1,1}$ we consider the representations
\begin{equation}
\begin{array}{ll}
\delta v^{\epsilon,w,k+1,1}_{,i}=
\int_0^t\int_{{\mathbb R}^3}{\Big(}(-f_1+f_1f_{1,1}+v^{\epsilon,w,k}f_{1,2})\delta v^{\epsilon,w,k}_{,3}
+\delta v^{\epsilon,w,k}f_{1,2} v^{\epsilon,w,k}_{,3}\\
\\
+(\delta v^{\epsilon,w,k}-f_1\delta v^{\epsilon,w,k}_{,1}-v^{\epsilon,w,k}\delta v^{\epsilon,w,k}_{,2}-\delta v^{\epsilon,w,k}v^{\epsilon,w,k}_{,2})f_{1,3}{\Big )}(s,y)\\
\\
\times G_{\epsilon,i}(t-s,x-y)dyds,
\end{array}
\end{equation}
and
\begin{equation}
\begin{array}{ll}
\delta v^{\epsilon,w,k+1,1}_{,i,j}=
\int_0^t\int_{{\mathbb R}^3}{\Big(}(-f_1+f_1f_{1,1}+v^{\epsilon,w,k}f_{1,2})\delta v^{\epsilon,w,k}_{,3}
+\delta v^{\epsilon,w,k}f_{1,2} v^{\epsilon,w,k}_{,3}\\
\\
+(\delta v^{\epsilon,w,k}-f_1\delta v^{\epsilon,w,k}_{,1}-v^{\epsilon,w,k}\delta v^{\epsilon,w,k}_{,2}-\delta v^{\epsilon,w,k}v^{\epsilon,w,k}_{,2})f_{1,3}{\Big )}_{,i}(s,y)\\
\\
\times G_{\epsilon,j}(t-s,x-y)dyds.
\end{array}
\end{equation}
For dimension $n=3$ let
\begin{equation}
\begin{array}{ll}
C:=4(2+(2n+1)6C_0)^2C_1
+4{\big (}(2n+1)(C_2+(2n+1)C_3)(5C_0^2+2C_0C_4)\\
\\
+2(C_2+(2n+1)C_3)(2n+1)C_08(2n+1)C_0){\big)}
\end{array}
\end{equation}
(the reason with this definition will become apparent later). We assume inductively that
\begin{equation}
\sup_{t\geq 0,x\in {\mathbb R}^n}\left( |v^{\epsilon ,w,k}(t,x)|
+\sum_{i=1}^n|v^{\epsilon ,w,k}_{,i}(t,x)|+\sum_{i=1,j}^n|v^{\epsilon ,w,k}_{,i,j}(t,x)|\right) \leq C_0,
\end{equation}
along with $\sup_{t\geq 0}|v^{\epsilon,w,k}(t,.)|_{H^2}\leq C_0$ for the same generic constant $C_0$. In order to avoid inflation of the number of constants we assume w.l.o.g. the same upper bound for the relevant norms of the parameter function $w$, i.e.,
\begin{equation}
\sup_{t\geq 0,x\in {\mathbb R}^n}\left( |w(t,x)|
+\sum_{i=1}^n|w_{,i}(t,x)|+\sum_{i=1,j}^n|w_{,i,j}(t,x)|\right) \leq C_0,
\end{equation}
along with $\sup_{\tau\geq 0}|w(t,.)|_{H^2}\leq C_0$.
For the present we shall take this as in induction hypothesis at stage $k$ (for $k=0$ recall that we defined $v^{\epsilon ,0}=-f_1$- hence, at stage $k=0$ the induction hypothesis is satisfied). Estimates in $L^p$ of Gaussians like $G_{\epsilon}(t-s,.)$ or its first order derivatives can be obtained by breaking up integrals. 
Consider a function $\phi_1\in C^{\infty}\left(B_1(0))\right)$, i.e. with support in $B_1(0)$, and with $\phi_1(x)=1$ for $|x|\leq 0.5$. Note that $\phi_1$ and $1-\phi_1$ build a partition of unity on ${\mathbb R}^3$. We refer to \cite{KB2}, but this can be found in standard texts for sure. In addition we may use a generalized Young inequality
\begin{equation}
\begin{array}{ll}
|f\ast g|_{L^r}\leq |f|_{L^p}|g|_{L^q}, \mbox{ for all } 1\leq p,q,r,\leq \infty \mbox{ where }
\frac{1}{p}+\frac{1}{q}=1+\frac{1}{r},
\end{array}
\end{equation}
For $t>s$ we have $|G_{\epsilon}(t-s,.)|_{L^1}\leq C_1< \infty$ and $|G_{\epsilon ,i}(t-s,.)|_{L^1}\leq C_1< \infty$ for all $1\leq i\leq n$ and some $C_1>0$ which is even independent of the size of $t-s$. We get the (generous) estimate
\begin{equation}
\begin{array}{ll}
\exp(-Ct)|\delta v^{\epsilon,w,k+1,1}(t,.)|_{H^2}\\
\\
\leq \exp(-Ct)\int_0^t (2+(2n+1)C_0)^2C_1|\delta v^{\epsilon,w ,k}(s,.)|_{H^2}ds\\
\\
\leq \exp(-Ct)\int_0^t\exp(Cs)(2+(2n+1)6C_0)^2 C_1|\delta v^{\epsilon ,w,k}|^{\mbox{exp},C}_{H^2}ds\\
\\
\leq \exp(-Ct)\frac{(2+(2n+1)6C_0)^2C_1}{C}\left( \exp(Ct)-1\right) |\delta v^{\epsilon ,w,k}|^{\mbox{exp},C}_{H^2}\\
\\
\leq \frac{(2+(2n+1)6C_0)^2C_1}{C} |\delta v^{\epsilon ,w,k}|^{\mbox{exp},C}_{H^2}.
\end{array}
\end{equation}
Hence, by choice of $C>0$ we have
\begin{equation}
|\delta v^{\epsilon ,w,k+1,1 }|^{\mbox{exp},C}_{H^2}\leq \frac{1}{4} |\delta v^{\epsilon ,w,k}(s,.)|^{\mbox{exp},C}_{H^2}.
\end{equation}
Finally we do the estimate for $|\delta v^{\epsilon ,w,k,3 }|^{\mbox{exp},C}_{H^2}$, i.e., the $H^2$-estimate for
\begin{equation}
\int_0^t\int_{{\mathbb R}^3}\left( S^{\epsilon,k+1}_w-S^{\epsilon,k}_w\right)(s,y)G_{\epsilon}(t-s,x-y)dyds
\end{equation}
Using the generalized Young inequality this reduces to a $H^1$-estimate for
\begin{equation}
 S^{\epsilon,k+1}-S^{\epsilon,k}
\end{equation}
The latter term involves a convolution with the Laplacian kernel and we use the partition of unity in ${\mathbb R}^3$ with $\phi_1$ (support $B_1(0)$) and $(1-\phi_1)$ as defined above (cf. also \cite{KB2} for similar considerations).
For all $1\leq i\leq n$ we have
\begin{equation}
\int_{{\mathbb R}^3}|\phi_1K_{,i}(z)|d^3z =\int_{{\mathbb R}^3}|\phi_1(z)|{\Big |}\frac{z_i}{|z|^3}{\Big |}d^3z\sim \int_{{\mathbb R}^3}|\phi_1(z)|{\Big |}\frac{r}{|r|^3}{\Big |}r^2dr= C
\end{equation}
for some constant $C>0$. Hence, $\phi_1K_{,i}\in L^1$, and 
in order finish our prove of a contractive property we consider again three different parts of the expression for $S^{k+1}_w-S^k_w=S^{\epsilon,k+1,1}_w-S^{\epsilon,k,1}_w+S^{\epsilon,k+1,2}_w-S^{\epsilon,k,2}_w+S^{\epsilon,k+1,3}_w-S^{\epsilon,k,3}_w$ considered above. All estimates can be done by splitting up the integrals and considering the sums involving $\phi_1K_{,i}\in L^1$ and $(1-\phi_1)K_{,i}\in L^2\cap L^{\infty}$ separately. We have finite constants
\begin{equation}
C_2:=|\phi_1K_{,i}|_{L^1}
\end{equation}
and
\begin{equation}
C_3:=|(1-\phi_1)K_{,i}|_{L^2}+|(1-\phi_1)K_{,i}|_{L^{\infty}}+\sum_{j=1}^n|\left( (1-\phi_1)K_{,i}\right)_{,j}|_{L^{\infty}}
\end{equation}
and we estimate the summands involving the (outside) truncated kernel $\phi_1K_{,i}$ by the generalized Young inequality and the summands for the kernel (truncated inside) $(1-\phi_1)K_{,i}$ with some basic techniques of the Fourier transform. In addition to the preceding inductive assumptions concerning the bounds for $v^{\epsilon ,k}$ and $f_1$ we mention our assumption on $w$. We have
\begin{equation}
\begin{array}{ll}
\sup_{t\geq 0,x\in {\mathbb R}^n}2{\Big(}|w(t,x)|+|\sum_i w_{,i})(t,x)|\\
\\
+|\sum_{i,j}w_{,i,j})(t,x)|{\Big )}\leq C_4
\end{array}
\end{equation}
 for all $t\geq 0$ and some constant $C_4$. The first term $S^{\epsilon,k+1,1}_w-S^{\epsilon,k,1}_w$ can be estimated by referring to the suprema.
We remark that for bounded $C^1$-functions $g$ and functions $\delta v^{\epsilon,k}$ with bounded first order derivatives we have $H^1$-estimates for expressions of the form
\begin{equation}
\begin{array}{ll}
|g\delta v^{\epsilon,k}|_{H^1}=|g\delta v^{\epsilon,w,k}_{,j}|_{L^2}+\sum_{i=1}^n|(g\delta v^{\epsilon,w,k})_{,j,i}|_{L^2}\\
\\
\leq \sqrt{\int_{{\mathbb R}^3}g(y)^2\delta v^{\epsilon,w,k}_{,j}(y)^2dy}
+\sum_{i=1}^n|g_{,i}\delta v^{\epsilon,w,k}_{,j}|_{L^2}
+\sum_{i=1}^n|g\delta v^{\epsilon,w,k}_{,j,i}|_{L^2}\\
\\
\leq (2n+1)C|\delta v^{\epsilon,w,k}|_{H^1}
\end{array}
\end{equation}
for a constant  $C>0$ which is an upper bound for $|g|_{L^{\infty}}$ and $|g_{,i}|_{L^{\infty}}$, i.e., all first order derivatives.
In the following we fix time $t\geq 0$ arbitrarily and suppress the notation of time.
This way we get  
\begin{equation}
\begin{array}{ll}
{\big|}S^{\epsilon,k+1,1}_w-S^{\epsilon,k,1}_w{\big |}_{H^1}\equiv {\big |}\int_{{\mathbb R}^3} K_{,1}(x-y){\Big(} f_{1,1}^2+\left( v^{\epsilon ,w,k}_{,2}\right) ^2+\left( f_{1,1}+v^{\epsilon ,w,k}_{,2}\right) ^2+\\
\\
f_{1,2}v^{\epsilon,w,k}_{,1}+f_{1,3}w_{,1}+f_{2,3}w_{,2}{\Big)}(y)dy{\Big)}\delta v^{\epsilon,w,k}_{,3}{\big |}_{H^1}\\
\\
\leq C_2{\Big|} {\Big (}f_{1,1}^2+\left( v^{\epsilon ,w,k}_{,2}\right) ^2+\left( f_{1,1}+v^{\epsilon ,k}_{,2}\right) ^2+
f_{1,2}v^{\epsilon,w,k}_{,1}+f_{1,3}w_{,1}+f_{2,3}w_{,2}{\Big)}\delta v^{\epsilon,w,k}_{,3}
{\Big|}_{H^1}\\
\\
+(2n+1)C_3{\Big|} {\Big (}f_{1,1}^2+\left( v^{\epsilon ,w,k}_{,2}\right) ^2+\left( f_{1,1}+v^{\epsilon ,w,k}_{,2}\right) ^2+
f_{1,2}v^{\epsilon,w,k}_{,1}+f_{1,3}w_{,1}+f_{2,3}w_{,2}{\Big)}\delta v^{\epsilon,w,k}_{,3}
{\Big|}_{H^1}\\
\\
\leq (2n+1)(C_2+(2n+1)C_3)(5C_0^2+2C_0C_4)|\delta v^{\epsilon,w,k}_{,3}|_{H^1}\\
\\
\leq (2n+1)(C_2+(2n+1)C_3)(5C_0^2+2C_0C_4)|\delta v^{\epsilon,w,k}|_{H^2},
\end{array}
\end{equation}
where we use the $H^{1,\infty}$ bound for the factor $(1-\phi)K_{,i}$ and its first order derivative.
For the second term we have
\begin{equation}
\begin{array}{ll}
{\big |}S^{\epsilon,k+1,2}_w-S^{\epsilon,k,2}_w{\big |}_{H^1}= \\
\\
{\big |}\int_{{\mathbb R}^3} K_{,1}(x-y){\Big(} \left( v^{\epsilon ,w,k}_{,2}\right) ^2-\left( v^{\epsilon ,w,k-1}_{,2}\right) ^2+\left( f_{1,1}+v^{\epsilon ,w,k}_{,2}\right) ^2\\
\\
-\left( f_{1,1}+v^{\epsilon ,w,k-1}_{,2}\right) ^2
+f_{1,2}\delta v^{\epsilon,w,k}_{,1}+\delta v^{\epsilon,w,k}_{,3}w_{,2}{\Big)}(y)dy{\Big)} v^{\epsilon,k-1}_{,3}{\big |}_{H^1}\\
\\
\leq {\big |}\int_{{\mathbb R}^3} (\phi_1K_{,1})(x-y){\Big(} \left( v^{\epsilon ,w,k}_{,2}+v^{\epsilon ,w,k-1}_{,2}\right)\delta v^{\epsilon ,w,k}_{,2}+\\
\\
\left(2 f_{1,1}+v^{\epsilon ,w,k}_{,2}+v^{\epsilon ,w,k-1}_{,2}\right) \delta v^{\epsilon,w,k}_{,2}+
f_{1,2}\delta v^{\epsilon,w,k}_{,1}+\delta v^{\epsilon,w,k}_{,3}w_{,2}{\Big)}(y)dy{\Big)} v^{\epsilon,w,k-1}_{,3}{\big |}_{H^1}\\
\\
+{\big |}\int_{{\mathbb R}^3} (1-\phi_1)K_{,1})(x-y){\Big(} \left( v^{\epsilon ,w,k}_{,2}+v^{\epsilon ,w,k-1}_{,2}\right)\delta v^{\epsilon ,w,k}_{,2}+\\
\\
\left(2 f_{1,1}+v^{\epsilon ,w,k}_{,2}+v^{\epsilon ,w,k-1}_{,2}\right) \delta v^{\epsilon,w,k}_{,2}+
f_{1,2}\delta v^{\epsilon,w,k}_{,1}+\delta v^{\epsilon,w,k}_{,3}w_{,2}{\Big)}(y)dy{\Big)} v^{\epsilon,w,k-1}_{,3}{\big |}_{H^1}\\
\\
\leq (C_2+(2n+1)C_3){\big |}\int_{{\mathbb R}^3} {\Big(} \left( v^{\epsilon ,w,k}_{,2}+v^{\epsilon ,w,k-1}_{,2}\right)\delta v^{\epsilon ,w,k}_{,2}+\\
\\
\left(2 f_{1,1}+v^{\epsilon ,w,k}_{,2}+v^{\epsilon ,w,k-1}_{,2}\right) \delta v^{\epsilon,w,k}_{,2}+
f_{1,2}\delta v^{\epsilon,w,k}_{,1}+\delta v^{\epsilon,w,k}_{,3}w_{,2}{\Big)}{\Big)} v^{\epsilon,w,k-1}_{,3}{\big |}_{H^1}\\
\\
\leq (C_2+(2n+1)C_3)(2n+1)C_08(2n+1)C_0)|\delta v^{\epsilon,w k}(t,.)|_{H^2},
\end{array}
\end{equation}
uniformly for all $t\geq 0$, and where we suppressed the notation of the parameter $t\geq 0$ for all terms except the last one.  
The first term on the right side of the latter equation can be estimated using the generalized Young inequality. 
Note that the third term differs from the second term only with respect to the kernel function $K_{,2}$ instead of $K_{,1}$. Hence, we have the same estimate for
\begin{equation}
\begin{array}{ll}
S^{\epsilon,k+1,3}-S^{\epsilon,k,3}\equiv\\
\\
-{\Big (} \int_{{\mathbb R}^3} K_{,2}(x-y){\Big (} \left( v^{\epsilon ,k}_{,2}\right) ^2-\left( v^{\epsilon ,k}_{,2}\right) ^2+\left( f_{1,1}+v^{\epsilon ,k}_{,2}\right) ^2-\left( f_{1,1}+v^{\epsilon ,k-1}_{,2}\right) ^2\\
\\
f_{1,2}\delta v^{\epsilon ,k}_{2,1}+f_{1,3}I_{3,1}(f_{1,1},\delta v^{\epsilon ,k}_{2,2})+\delta v^{\epsilon ,k}_{,3}I_{3,2}(f_{1,1},v^{\epsilon ,k}_{,2})+v^{\epsilon ,k-1}_{,3}I_{3,2}(f_{1,1},\delta v^{\epsilon ,k}_{,2}){\Big)}(y)dy{\Big)}f_{1,3}.
\end{array}
\end{equation}
Summing up the estimates fr the source terms of the form $S^{\epsilon,k+1}_w-S^{\epsilon,k}_w$ we have 
\begin{equation}
\begin{array}{ll}
{\big |}S^{\epsilon,k+1}_w-S^{\epsilon,k}_w{\big |}_{H^1}= \\
\\
\leq (2+(2n+1)6C_0)^2C_1+{\big (}(2n+1)(C_2+(2n+1)C_3)(5C_0^2+2C_0C_4)\\
\\
+2(C_2+(2n+1)C_3)(2n+1)C_08(2n+1)C_0){\big)}|\delta v^{\epsilon,w k}(t,.)|_{H^2}.
\end{array}
\end{equation}
Hence, we observe that the choice
\begin{equation}
\begin{array}{ll}
C:=4(2+(2n+1)6C_0)^2C_1+4{\big (}(2n+1)(C_2+(2n+1)C_3)(5C_0^2+2C_0C_4)\\
\\
+2(C_2+(2n+1)C_3)(2n+1)C_08(2n+1)C_0){\big)}
\end{array}
\end{equation}
is sufficient in order to get the $|.|^{\mbox{exp},C}_{H^2}$ with factor $\frac{1}{2}$.
\end{proof}

We have obtained a solution $v^w$ of the parameterized Cauchy problem. Next a rather simple argument (using similar techniques as above) leads to a stability with respect to $w$.
\begin{lem}
The parametrized solution $v^{\epsilon,w}$ is stable with respect to $w(t,.)\in  C^2_b$ (uniformly w.r.t. $t$), i.e., for each $\epsilon >0$ we have
\begin{equation}
|v^{\epsilon,w}(t,.)-v^{\epsilon,w'}(t,.)|_{H^2}\leq C|w(t,.)-w'(t,.)|_{C^2}
\end{equation}
for all $t\geq 0$ and some constant $C>0$.
\end{lem}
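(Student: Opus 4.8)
The plan is to set $\delta v := v^{\epsilon,w}-v^{\epsilon,w'}$ and derive the Cauchy problem it satisfies by subtracting the two copies of the fixed-point equation (equivalently, by passing to the limit $k\uparrow\infty$ in the difference of two runs of the iteration (\ref{Cauchyit}) started from the same datum $-f_1$, one with parameter $w$ and one with $w'$). The result is a Cauchy problem of exactly the type of (\ref{Cauchyepsilondelta}): zero initial data, heat operator $\partial_t-\epsilon\Delta$ on the left, and a right-hand side that splits into (a) terms linear or bilinear in $\delta v,\delta v_{,1},\delta v_{,2},\delta v_{,3}$ with coefficients built from $f_1$, $v^{\epsilon,w}$, $v^{\epsilon,w'}$ and their first derivatives, and (b) terms carrying the parameter increment $w-w'$ explicitly. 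The group (b) terms arise only from the Leray-type integrals, through the summands $f_{1,3}w_{,1}$ and $v^{\epsilon,w}_{,3}w_{,2}$ (resp. with $w'$) inside $\int K_{,i}(x-y)(\cdots)(y)\,dy$. One then writes the Duhamel representation of $\delta v$ against the Gaussian $G_\epsilon$, as in (\ref{deltavrep}), and estimates in the weighted norm $|\cdot|^{\exp,C}_{H^2}$ of (\ref{expnorm}).

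For the group (a) terms the estimate is a verbatim rerun of the contraction lemma: convolution with $G_\epsilon$ (and its first-order derivatives, and with the split kernels $\phi_1K_{,i}\in L^1$ and $(1-\phi_1)K_{,i}\in L^2\cap L^\infty$) is bounded, the generalized Young inequality reduces the $H^2$-estimate of the Gaussian part to an $H^1$-estimate of the integrand, and the product structure produces a factor $|\delta v|^{\exp,C}_{H^2}$ with a constant depending only on $C_0,\dots,C_4$. Choosing $C>0$ of the same shape as in the contraction lemma makes this contribution $\le\frac12|\delta v|^{\exp,C}_{H^2}$. For the group (b) terms the same machinery applies, but now the increment $w-w'$ (together with $w_{,1}-w'_{,1}$, $w_{,2}-w'_{,2}$) is pulled out in $C^2$-norm, while the accompanying factors $f_{1,3}$, $v^{\epsilon,w}_{,3}$, $v^{\epsilon,w'}_{,3}$ are of polynomial decay of order $2$ (decay lemma) and bounded in $H^1$ by $C_0$; this yields a bound $\le C'\sup_{s\in[0,t]}|w(s,.)-w'(s,.)|_{C^2}$, with $C'$ depending on $\epsilon$ only through $C_1$. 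Absorbing the first contribution on the left gives $|\delta v|^{\exp,C}_{H^2}\le 2C'\sup_{s}|w(s,.)-w'(s,.)|_{C^2}$, and unwinding the weighted norm on a fixed time window (keeping the factor $e^{Ct}$) yields the asserted estimate, uniformly in $t$ and with $C$ depending on $\epsilon$.

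The one step that is not a pure rerun, and hence the main obstacle, is checking that the difference of the two Leray integrals in group (b) factorizes cleanly: one must verify that the analogue of (\ref{sksk}) for the two parameter runs decomposes into a piece genuinely linear in $\delta v$ with $H^2$-bounded coefficients, plus a piece of the form (decaying, $H^1$-bounded factor)$\,\times\,(w-w')$ in which the parameter increment enters only in $C^2$-norm, with no residual term that would require $H^2$- rather than $C^2$-control of $w-w'$ and no term quadratic in $w-w'$ lacking a small prefactor. The structural fact that makes this work is precisely that inside the integral the parameter multiplies the decaying functions $f_{1,3}$ and $v^{\epsilon}_{,3}$, so that the split-kernel $\phi_1K_{,i}/(1-\phi_1)K_{,i}$ argument keeps the non-parameter factor in $H^1$. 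Once this factorization is written out explicitly, it must also be checked to be compatible with the $t$- and $\epsilon$-uniform bounds supplied by the preceding lemmas; everything else is routine.
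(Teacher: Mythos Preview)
Your approach is exactly what the paper intends: the paper gives no proof of this lemma, merely prefacing it with the remark that ``a rather simple argument (using similar techniques as above) leads to a stability with respect to $w$.'' Your plan---subtracting the two fixed-point equations, splitting the right-hand side into a part linear in $\delta v$ (absorbed via the weighted $|\cdot|^{\exp,C}_{H^2}$-norm contraction) and a part carrying $w-w'$ (controlled in $C^2$ through the split-kernel and Young-inequality machinery)---is precisely those ``similar techniques,'' and your write-up is in fact considerably more detailed than the paper's own treatment.
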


Now for each $w\in  C^2_b$ we have obtained a solution $v^{\epsilon}_w$ of the parameterized Cauchy problem which has the representation
\begin{equation}\label{Cauchyparameter}
\begin{array}{ll}
v^{\epsilon,w}(t,x)=\int_0^t\int_{{\mathbb R}^3}{\Big(}(-f_1+f_1f_{1,1}+v^{\epsilon,w}f_{1,2})v^{\epsilon,w}_{,3}\\
\\
+(v^{\epsilon,w}-f_1v^{\epsilon,w,k}_{,1}-v^{\epsilon,w}v^{\epsilon,w}_{,2})f_{1,3}{\Big )}\\
\\
\times \frac{1}{(2\sqrt{\epsilon \pi (t-s)})^n}\exp\left(-\frac{(x-y)^2}{4\epsilon (t-s) } \right) dyds\\
\\
+\int_0^t{\Big (}\int_{{\mathbb R}^3} K_{,1}(z-y){\Big(} f_{1,1}^2+\left( v^{\epsilon,w}_{,2}\right) ^2+\left( f_{1,1}+v^{\epsilon,w}_{,2}\right) ^2+\\
\\
f_{1,2}v^{\epsilon,w}_{,1}+f_{1,3}w_{,1}+v^{\epsilon,w}_{,3}w_{,2}{\Big)}(y)dy{\Big)}v^{\epsilon,w}_{,3}{\Big )}(s,z)\\
\\
\times \frac{1}{(2\sqrt{\epsilon \pi (t-s)})^n}\exp\left(-\frac{(x-y)^2}{4\epsilon (t-s) } \right)dzds\\
\\
-\int_0^t{\Big (} \int_{{\mathbb R}^3} K_{,2}(x-y){\Big (} f_{1,1}^2+\left( v^{\epsilon,w}_{,2}\right) ^2+\left( f_{1,1}+v^{\epsilon,w}_{,2}\right) ^2+\\
\\
f_{1,2}v^{\epsilon,w}_{2,1}+f_{1,3}w_{,1}+v^{\epsilon,w}_{,3}
w_{,2}{\Big)}(y)dyf_{1,3}{\Big)}(s,z)\\
\\
\times \frac{1}{(2\sqrt{\epsilon \pi (t-s)})^n}\exp\left(-\frac{(x-y)^2}{4\epsilon (t-s) } \right)dzds.
\end{array}
\end{equation}
The divergence property of the vector field $(f_1,v^{\epsilon,w},w)$ leads to
\begin{equation}\label{Cauchydivergence}
\begin{array}{ll}
-w_{,3}(t,x)=f_{1,1}(t,x)\\
\\
+\int_0^t\int_{{\mathbb R}^3}{\Big(}(-f_1+f_1f_{1,1}+v^{\epsilon,w}f_{1,2})v^{\epsilon,w}_{,3}\\
\\
+(v^{\epsilon,w,k}-f_1v^{\epsilon,w}_{,1}-v^{\epsilon,w,k}v^{\epsilon,w}_{,2})f_{1,3}{\Big )} G_{\epsilon,2}(t-s,x-y) dyds\\
\\
+\int_0^t{\Big (}\int_{{\mathbb R}^3} K_{,1}(z-y){\Big(} f_{1,1}^2+\left( v^{\epsilon,w}_{,2}\right) ^2+\left( f_{1,1}+v^{\epsilon,w}_{,2}\right) ^2+\\
\\
f_{1,2}v^{\epsilon,w}_{,1}+f_{1,3}w_{,1}+v^{\epsilon,w}_{,3}w_{,2}{\Big)}(y)dy{\Big)}v^{\epsilon,w}_{,3}{\Big )}(s,z) G_{\epsilon,2}(t-s,x-z)dzds\\
\\
-\int_0^t{\Big (} \int_{{\mathbb R}^3} K_{,2}(x-y){\Big (} f_{1,1}^2+\left( v^{\epsilon,w}_{,2}\right) ^2+\left( f_{1,1}+v^{\epsilon,w}_{,2}\right) ^2+\\
\\
f_{1,2}v^{\epsilon,w}_{2,1}+f_{1,3}w_{,1}+v^{\epsilon,w}_{,3}
w_{,2}{\Big)}(y)dyf_{1,3}{\Big)}(s,z) G_{\epsilon,2}(t-s,x-z)dzds.
\end{array}
\end{equation}
We can apply that $v^{\epsilon,w}$ is stable with respect to $w$. A simple iteration scheme and contraction estimate leads to the divergence free solution $(f_1,v^{\epsilon,w},w)$ of the Cauchy problem for each $\epsilon>0$ with $w_{,3}\in C^2\cap H^2$. However we cannot ensure the $L^2$-finiteness of the integral of $w_{,3}$ with respect to the third variable over the whole space and therefore for the velocity $v_3$ and $f_3$.
Applying this result we get
\begin{equation}
v^{\epsilon}:=v^{\epsilon ,0}+\sum_{k=1}^{\infty}\delta v^{\epsilon ,k}\in H^2_{\mbox{exp},C}.
\end{equation}
Hence for each $t\geq 0$ we have
\begin{equation}
v^{\epsilon}(t,.)\in H^2\subset C^{\alpha}
\end{equation}
for some $\alpha\in (0,1)$ and uniformly with respect to time $t$. Hence we have the representation 
 \begin{equation}\label{fundrep*}
\begin{array}{ll}
v^{\epsilon}(t,x)=\int_0^t\int_{{\mathbb R}^3}{\Big(}(-f_1+f_1f_{1,1}+v^{\epsilon}f_{1,2})v^{\epsilon}_{,3}\\
\\
+(v^{\epsilon,w,k}-f_1v^{\epsilon,w,k}_{,1}-v^{\epsilon}v^{\epsilon}_{,2})f_{1,3}{\Big )}G_{\epsilon}(t-s;x-y)dyds\\
\\
+\int_0^t{\Big (}\int_{{\mathbb R}^3} K_{,1}(z-y){\Big(} f_{1,1}^2+\left( v^{\epsilon}_{,2}\right) ^2+\left( f_{1,1}+v^{\epsilon}_{,2}\right) ^2+\\
\\
f_{1,2}v^{\epsilon}_{,1}+f_{1,3}I_{3,1}(f_{1,1},v^{\epsilon}_{,2})+f_{2,3}I_{3,2}(f_{1,1},v^{\epsilon}_{,2}){\Big)}(y)dy{\Big)}v^{\epsilon}_{,3}{\Big )}(s,z)\\
\\
\times G_{\epsilon}(t-s;x-z)dzds\\
\\
-\int_0^t{\Big (} \int_{{\mathbb R}^3} K_{,2}(x-y){\Big (} f_{1,1}^2+\left( v^{\epsilon}_{,2}\right) ^2+\left( f_{1,1}+v^{\epsilon}_{,2}\right) ^2+\\
\\
f_{1,2}v^{\epsilon}_{2,1}+f_{1,3}I_{3,1}(f_{1,1},v^{\epsilon}_{2,2})+v^{\epsilon}_{,3}I_{3,2}(f_{1,1},v^{\epsilon}_{,2}){\Big)}(y)dyf_{1,3}{\Big)}(s,z)\\
\\
\times G_{\epsilon}(t-s;x-z)dzds
\end{array}
\end{equation}
  of the solution $v^{\epsilon}$ of the Cauchy problem (\ref{Cauchyepsilon}). Spatial regularity can be obtained from this expression straightforwardly then. Furthermore, the argument above can be repeated for any order $m\geq 2$ of regularity $f_1\in C^m\cap H^m$ yielding $v^{\epsilon}(t,.),w(t,.)\in C^m\cap H^m$ uniformly with respect to $t$ and with polynomial decay of order $m$. Furthermore, using standard a priori estimates for the Gaussian as in the lemma on polynomial decay above we observe from (\ref{fundrep*}) we see that $|v^{\epsilon}(t,.)|_{H^2}\leq C$ for some constant $C>0$ which is independent of $\epsilon$. Furthermore you easily observe that this holds also for the solution $v^{\epsilon}_{\infty}:=\lim_{t\uparrow \infty}v^{\epsilon}(t,.)$ of the related elliptic problem.
%
%
 
\footnotetext[1]{\texttt{{kampen@mathalgorithm.de}, {kampen@wias-berlin.de}}.}

\end{document}